
\documentclass{amsart}

\usepackage{amsmath}
\usepackage{amssymb}
\usepackage{enumerate}

\usepackage[matrix,arrow,arc]{xy}

\renewcommand{\mod}{\operatorname{mod}}
\newcommand{\Hom}{\operatorname{Hom}}
\newcommand{\End}{\operatorname{End}}

\newcommand{\ann}{\operatorname{ann}}
\newcommand{\ind}{\operatorname{ind}}

\newcommand{\soc}{\operatorname{soc}}
\newcommand{\rad}{\operatorname{rad}}

\newcommand{\Tr}{\operatorname{Tr}}

\newcommand{\umod}{\operatorname{\underline{mod}}}

\newcommand{\tp}{\operatorname{top}}

\newcommand{\op}{\operatorname{op}}
\newcommand{\Ker}{\operatorname{Ker}}

\newcommand{\charact}{\operatorname{char}}

\newcommand{\cC}{\mathcal{C}}

\newcommand{\bA}{\mathbb{A}}
\newcommand{\bB}{\mathbb{B}}
\newcommand{\bC}{\mathbb{C}}
\newcommand{\bD}{\mathbb{D}}
\newcommand{\bE}{\mathbb{E}}
\newcommand{\bF}{\mathbb{F}}
\newcommand{\bG}{\mathbb{G}}
\newcommand{\bZ}{\mathbb{Z}}

\newcommand{\wh}{\widehat}

\newtheorem{theorem}{Theorem}[section] 
\newtheorem{lemma}[theorem]{Lemma}     
\newtheorem{corollary}[theorem]{Corollary}
\newtheorem{proposition}[theorem]{Proposition}

\newtheorem*{problem*}{Problem}

\theoremstyle{definition}

\newtheorem{example}[theorem]{Example}
\newtheorem{definition}[theorem]{Definition}




\title[Socle deformations of selfinjective orbit algebras]%
 {Socle deformations of selfinjective orbit algebras of tilted type} 

\author[A. Skowro\'nski and K. Yamagata]{Andrzej Skowro\'nski and Kunio Yamagata} 

	\dedicatory{Dedicated to Jos\'e Antonio de la Pe\~{n}a on the occasion of his 60th birthday}



\begin{document}
\maketitle

\dedicatory

\begin{abstract}
We survey recent development of the study 
of finite-dimensional selfinjective algebras 
over a field which are socle equivalent to 
selfinjective orbit algebras of tilted type.
\end{abstract}

\section{Introduction} 
\label{sec:intro}

\noindent

Throughout this article, by an algebra we mean a basic, connected, finite-dimensional 
associative $K$-algebra with identity over a fixed field $K$.
For an algebra $A$, we 
denote by $\mod A$ the category of finite-dimensional right $A$-modules 
and $\ind A$ the full subcategory of $\mod A$ of indecomposable modules.
We denote by $\Gamma_A$ the Auslander-Reiten quiver of $A$,
and by $\tau_A$ and $\tau_A^{-1}$ the Auslander-Reiten 
translations $D \Tr$ and $\Tr D$, respectively, where 
$D$ is the standard duality $\Hom_K(-,K)$ on $\mod A$ and $\Tr$ is the 
transpose.

An algebra $A$ is called \emph{selfinjective}
if $A$ is injective in $\mod A$, or equivalently
the projective modules in $\mod A$ are injective.
Two selfinjective algebras $A$ and $A'$ are said
to be \emph{socle equivalent} if the quotient algebras
$A/\soc(A)$ and $A' / \soc (A')$ are isomorphic.
In this case, we also call $A$ a \textit{socle deformation} of $A'$.
Moreover, two selfinjective algebras $A$ and $A'$ are called 
\textit{stably equivalent} if their stable categories $\umod A$ and $\umod A'$
are equivalent as $K$-categories.

In the representation theory of selfinjective algebras an important
role is played by the selfinjective algebras $A$ which admit
Galois covering of the form $\widehat{B} \to \widehat{B}/G = A$,
where $\widehat{B}$ is the repetitive category of an algebra $B$
and $G$ is an admissible group of automorphisms of $\widehat{B}$.
In this theory, the selfinjective orbit algebras $\widehat{B}/G$
given by algebras $B$ of finite global dimension 
and infinite cyclic groups $G$ are of special interest.
Namely, frequently interesting selfinjective algebras
are Morita equivalent to socle deformations of such orbit algebras,
and we may reduce their representation theory to that
for the corresponding algebras of finite global dimension.
For example, it is the case for selfinjective algebras
of polynomial growth over an algebraically closed field $K$
(see Section~\ref{sec:2} for details).
This is also the case for the tame principal blocks 
of the enveloping algebras of restricted Lie algebras \cite{FS1}, 
or more general infinitesimal groups \cite{FS2},
over algebraically closed fields of odd characteristic.
We also note that the prominent class of special biserial
algebras over an algebraically closed field is formed by
the orbit algebras of repetitive categories 
(see \cite{DS}, \cite{ES}, \cite{PS}).

For an arbitrary field $K$, even the structure of selfinjective algebras
of finite representation type over $K$
is far from being understood 
(see Section~\ref{sec:6} for new results toward solution
of this problem).
In this article, we survey old and new results concerning selfinjective
algebras $A$ over a field which are socle equivalent to orbit algebras
$\widehat{B}/G$ for an algebra $B$ and $G$ the infinite cyclic group
generated by the composition $\varphi \nu_{\widehat{B}}$
of the Nakayama automorphism $\nu_{\widehat{B}}$
and a positive automorphism $\varphi$ of $\widehat{B}$.
In particular, we present, in  Sections \ref{sec:3} and \ref{sec:4},
criteria for a selfinjective algebra $A$ to have this property.
Sections \ref{sec:5} and \ref{sec:6} are devoted to presentations
of interesting selfinjective algebras as socle deformations 
of orbit algebras of repetitive categories of tilted algebras.
In the final  Section~\ref{sec:7} we show that, for the class 
of selfinjective algebras of tilted type, discussed in this article,
the stable quivalence and socle equivalence coincide.

For basic background on the representation theory discussed in this article
we refer to the books
\cite{ASS}, \cite{SY9}, \cite{SY11},
and the survey articles \cite{S4}, \cite{SY8}.

 \section{Repetitive algebras and orbit algebras}
 \label{sec:2}

Let $B$ be an algebra and $1_{B}=e_{1}+\cdots +e_{n}$ 
a decomposition of the identity of $B$ into a sum of 
pairwise orthogonal primitive idempotents  $e_{1}, \dots , e_{n}$ of $B$. 
The algebra is regarded as a $K$-category, denoted by $B$ again, whose objects
are $e_1, \dots, e_n$ and the morphism space $B(e_i, e_j) 
= \Hom_B(e_i, e_j)$ is $\Hom_B(e_iB, e_jB)$ for $i, j \in \{1, \dots, n\}$.
We then associate to $B$  a selfinjective locally bounded 
$K$-category $\widehat B$, called the 
\emph{repetitive category} of $B$ (see~\cite{HW}). 
The objects of $\widehat B$ are $e_{m,i}$ for $m\in{\mathbb{Z}}$,
$i\in \{1, \dots, n\}$, and the morphism spaces are defined as follows
\[
\widehat B(e_{m,i},e_{r,j})=\left\{\begin{array}{ll}
e_{j}Be_{i},    & r=m,\\
D(e_{i}Be_{j}),& r=m+1,\\
0,& \textrm{otherwise}.
\end{array} \right.
\]
Observe that 
$e_{j}Be_{i}=\Hom _{B}(e_{i}B,e_{j}B)$, $D(e_{i}Be_{j})=e_{j}D(B)e_{i}$ 
and
\[
\bigoplus_{(m,i)\in{\mathbb{Z}\times \{1, \dots ,n\}}} \widehat B(e_{m,i},e_{r,j})
=e_{j}B \oplus D(Be_{j}),
\]
for any $r\in{\mathbb{Z}}$ and $j\in\{1, \dots ,n\}$.
The composition of morphisms in $\wh{B}$ 
is naturally defined by the multiplication in the algebra $B$
and the $B$-bimodule structure of $D(B)$.

An automorphism $\varphi$ of the $K$-category $\widehat{B}$ is said
to be \textit{positive} or \textit{rigid}, respectively, if for each $(m,i)\in 
\bZ \times \{1, \dots, n\}$, 
\begin{center}
	$\varphi(e_{m,i})=e_{p,j}$ for some $p\geq m$ and $j\in \{1, \dots, n\}$,
\end{center}
or
\begin{center}
	$\varphi(e_{m,i})=e_{m,j}$ for some $j\in \{1, \dots, n\}$.
\end{center}
A \textit{strictly positive} automorphism of $\widehat B$ is a positive but not rigid automorphism
of $\widehat{B}$.
In particular, the \textit{Nakayama automorphism} of $\widehat B$, 
denoted by $\nu_{\widehat B}$, is the strictly positive automorphism defined by 
\[
\nu_{\widehat B}(e_{m,i})=e_{m+1,i} \quad \textrm{for all} \quad (m,i)\in \mathbb{Z}\times\{1, \dots ,n\}.
\]

A group $G$ of automorphisms  of $\widehat B$ is said to be 
\emph{admissible} if $G$ acts freely on the set of objects of 
$\widehat B$ and has finitely many orbits.
Following P.~Gabriel \cite{G}, we may consider 
the orbit category $\widehat B/G$ of $\widehat B$ with respect 
to $G$, whose objects are the $G$-orbits of objects in $\widehat B$.
The morphism space $(\widehat{B}/G)(a,b)$ for 
objects $a, b$ in $\widehat{B}/G$ is the subspace of
$\prod_{(x,y)\in{a\times b}} \widehat B(x,y)$ consisting of those elements
$ (f_{y,x})$ satisfying 
\[
  gf_{y,x}=f_{gy,gx} \,\,\,\, \text{for all}\,\,  g\in{G}, (x,y)\in{a\times b}.
\]
By the definition $\widehat B/G$ has
finitely many objects and the morphism spaces are finite-dimensional.
It therefore associates naturally the finite-dimensional 
selfinjective $K$-algebra $\bigoplus (\widehat B/G)$ which is the
direct sum of all morphism spaces in $\widehat B/G$, called the
\emph{orbit algebra} of $\widehat B$ with respect to $G$. 
A typical example is the trivial extension algebra $B\ltimes D(B)$ of 
an algebra $B$, which is isomorphic to the orbit algebra 
$\widehat B/(\nu_{\widehat{B}})$.
More generally, the $r$-\emph{fold trivial extension algebra} $T(B)^{(r)}$ 
of $B$ is defined as the orbit algebra 
$\widehat B/(\nu_{\widehat{B}}^{r})$, for $r \geq 1$. 
Note that $T(B)^{(1)}= T(B)$.

A selfinjective algebra $A$ is said to be \textit{of tilted type} if $A$ is 
isomorphic to an orbit algebra $\widehat{B}/G$ for a tilted algebra $B$
and an admissible  automorphism group $G$ of $\widehat{B}$. 
An important remark is that in this case 
any admissible automorphism group of $\wh{B}$ is
an infinite cyclic group generated by 
a strictly positive automorphism of $\wh{B}$
(see \cite[Theorem 7.1]{SY8}).
In the paper, by a \emph{tilted algebra} we mean the endomorphism
algebra $B = \End_H(T)$ of a tilting module $T$ in the module
category $\mod H$ of a basic, connected, hereditary algebra $H$ 
over a field $K$.

\medskip

The classification of all representation-finite selfinjective algebras 
over an algebraically closed field was given in the early 1980's by C. Riedtmann
(see \cite{BLR}, \cite{Ried}, \cite{Rd2}, \cite{Rd3}) 
via a combinatorial classification of the Auslander-Reiten
quivers of these algebras, based on the fundamental paper \cite{Ried}
linking them to the Dynkin quivers.
Equivalently, the Riedtmann's classification can be presented in terms of the
orbit algebras as follows (see \cite[Section~3]{S4}).

\begin{theorem}
\label{th:2.1}
 Let $A$ be a non-simple, basic, connected, selfinjective algebra 
over an algebraically closed field $K$.
 Then $A$ is of finite representation type if and only if
 $A$ is socle equivalent to an orbit algebra $\wh{B}/(\varphi)$, where $B$ is 
 a tilted  algebra of Dynkin type and $\varphi$ is a strictly  positive 
 automorphism of $\wh{B}$.
 \end{theorem}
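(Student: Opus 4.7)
The plan is to reduce the statement to Riedtmann's combinatorial classification of representation-finite selfinjective algebras over an algebraically closed field, and then to translate its output into the language of repetitive categories and orbit algebras.

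\emph{Sufficiency.} Suppose $A$ is socle equivalent to $\wh{B}/(\varphi)$ with $B$ tilted of Dynkin type $\Delta$ and $\varphi$ strictly positive. For $B$ tilted of Dynkin type, Happel's derived equivalence between $\mod B$ and the module category of the path algebra of $\Delta$ implies that $\wh{B}$ is locally representation-finite with stable Auslander--Reiten quiver $\bZ\Delta$. Since $\varphi$ is strictly positive, the group $(\varphi)$ acts freely on the objects of $\wh{B}$ with finitely many orbits, so $\wh{B}\to\wh{B}/(\varphi)$ is a Galois covering in the sense of Gabriel; the push-down functor then produces only finitely many indecomposable modules, showing that $\wh{B}/(\varphi)$ is representation-finite. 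Representation-finiteness is preserved by socle equivalence of selfinjective algebras, so $A$ itself is representation-finite.

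\emph{Necessity.} For the converse I would apply Riedtmann's structure theorem: the stable Auslander--Reiten quiver of $A$ has the form $\bZ\Delta/\Pi$ for a Dynkin diagram $\Delta$ and an admissible group $\Pi$, and $A$ is determined up to stable equivalence by its type (Dynkin diagram, frequency, torsion). The \emph{standard} algebras of each type were realised, following Bretscher--L\"aser--Riedtmann and Hughes--Waschb\"usch, as orbit algebras $\wh{B}/G$ of repetitive categories of tilted algebras of type $\Delta$; invoking the remark of \cite[Theorem~7.1]{SY8} recalled in the introduction, any such admissible $G$ must be infinite cyclic and generated by a strictly positive automorphism $\varphi$, giving the desired form $\wh{B}/(\varphi)$. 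For the \emph{non-standard} algebras, which occur only in characteristic $2$ for type $\bD_{3m}$, one must further produce a socle equivalence (not merely a stable equivalence) with the corresponding standard orbit algebra.

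The hard part is this non-standard case. Stable equivalence of standard and non-standard algebras of the same type is built into Riedtmann's classification, but upgrading it to a socle equivalence requires explicit presentations of both algebras by quivers with relations, together with a verification that the defining relations differ only by terms lying in the socle. That comparison is exactly what forces the final statement to involve socle equivalence rather than isomorphism, and it is the technical core of the theorem; everything else is organisational, assembling Happel's covering picture on one side and Riedtmann's combinatorial model on the other.
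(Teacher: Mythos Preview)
Your outline matches the approach the paper itself endorses. Note that Theorem~\ref{th:2.1} is stated in the paper without proof: this is a survey, and the authors attribute the result to Riedtmann's classification \cite{BLR}, \cite{Ried}, \cite{Rd2}, \cite{Rd3}, reformulated in orbit-algebra language in \cite[Section~3]{S4}. The commentary following the theorem explicitly singles out Gabriel's Galois covering techniques \cite{G} and the Hughes--Waschb\"usch description of $\mod\wh{B}$ for $B$ tilted of Dynkin type \cite{HW} as the crucial ingredients, and flags the $\charact(K)=2$ obstruction to replacing socle equivalence by isomorphism --- exactly the pieces you assemble.

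One small correction of emphasis: in the sufficiency direction you appeal to Happel's derived equivalence to conclude that $\wh{B}$ is locally representation-finite. The paper's narrative instead points directly to \cite{HW}, which predates Happel's triangulated picture and establishes the structure of $\mod\wh{B}$ (and hence representation-finiteness of $\wh{B}/(\varphi)$) by hand for tilted algebras of Dynkin type. Either route works, but the Hughes--Waschb\"usch argument is the historically and logically primary one here, and avoids invoking derived categories for what is ultimately an Auslander--Reiten-theoretic statement.
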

 
In the theorem, we may replace ``socle equivalence'' by ``isomorphism''
if $\charact(K) \neq 2$, but it is not the case in general
(see \cite{Rd3}  and \cite[Section~3]{S4}). 

We would like to stress that a crucial role in proving
the above interpretation of the Riedtmann's classification
theorem is played by the Galois covering techniques introduced
by P. Gabriel in \cite{G} and the description of the module categories
of repetitive categories of tilted algebras of Dynkin type
given by D.~Hughes and J.~Waschb\"usch in \cite{HW}.
This was the starting point for the study
of representation-infinite selfinjective algebras
of polynomial growth in \cite{S3},
where it was shown that all these algebras,
having simply connected Galois coverings,
are related to tilted algebras of Euclidean type
and tubular algebras.
Here, the new results on Galois coverings 
of representation-infinite algebras proved by
P.~Dowbor and A.~Skowro\'nski in \cite{DS1}, \cite{DS}
are heavily applied.

The classification of arbitrary representation-infinite
domestic selfinjective algebras was completed in
the series of papers
\cite{BoS1}, \cite{BoS2}, \cite{BoS3}, \cite{LS1} 
(see \cite[Section~4]{S4}).
In particular, we have the following theorem.

\begin{theorem}
\label{th:2.2}
Let $A$ be a basic, connected, selfinjective algebra 
over an algebraically closed field $K$.
Then $A$ is representation-infinite domestic if and only if 
$A$ is socle equivalent to an orbit algebra $\wh{B}/(\varphi)$, 
where $B$ is a tilted algebra of Euclidean type and 
$\varphi$ is a strictly positive automorphism of $\wh{B}$.
\end{theorem}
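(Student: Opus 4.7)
The plan is to handle the two implications separately, the easy one directly and the hard one as an assembly of the results from \cite{BoS1,BoS2,BoS3,LS1}.

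For the sufficiency, suppose $A$ is socle equivalent to an orbit algebra $\widehat B/(\varphi)$ with $B$ a tilted algebra of Euclidean type. First I would invoke the Hughes--Waschb\"usch type description of $\mod \widehat B$: since $B$ is tilted of Euclidean type, $\widehat B$ is a locally bounded selfinjective category whose AR-quiver is built from $\mathbb Z$-families of Euclidean tubes coming from the separating tubular families of $B$ and $B^{\op}$ together with postprojective/preinjective components. Using the Galois covering $\widehat B \to \widehat B/(\varphi)$ and the pushdown functor, this structure descends to $\widehat B/(\varphi)$, which inherits only finitely many one-parameter families of indecomposables together with finitely many AR-components of non-tube type; in particular $\widehat B/(\varphi)$ is representation-infinite domestic. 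To pass from $\widehat B/(\varphi)$ to any socle equivalent $A$, I would use that socle equivalence induces an isomorphism $A/\soc A \cong (\widehat B/(\varphi))/\soc(\widehat B/(\varphi))$ and that for selfinjective algebras this forces the stable categories $\umod A$ and $\umod(\widehat B/(\varphi))$ to be equivalent as $K$-categories. Representation type is a stable invariant for selfinjective algebras, so $A$ is representation-infinite domestic.

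For the necessity, assume $A$ is basic, connected, selfinjective, and representation-infinite domestic. The outline is to build a simply connected Galois covering of (a socle deformation of) $A$ and to recognize the base as a tilted algebra of Euclidean type. First I would study $\Gamma_A$ using general domesticity theory to locate a non-periodic generalized standard acyclic component $\cC$; the shape of $\cC$, combined with Liu/Skowro\'nski-type criteria, forces $\cC$ to contain a section whose underlying graph is a Euclidean diagram. Taking $B$ to be the endomorphism algebra of a slice in $\cC$ provides a candidate tilted algebra of Euclidean type. Next I would lift the inclusion $B\hookrightarrow A$ to a functor $\widehat B \to A$ using the selfinjectivity of $A$ and show, via the Dowbor--Skowro\'nski machinery of Galois coverings of representation-infinite algebras \cite{DS1,DS}, that (up to socle deformation) this functor is a Galois covering with an infinite cyclic group generated by a strictly positive automorphism $\varphi\in\Aut(\widehat B)$; by Theorem~7.1 of \cite{SY8} cited in the text, any admissible automorphism group here is indeed of this form. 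Finally one must dispose of the socle-deformation discrepancy, which genuinely occurs in small characteristics, by comparing $A/\soc A$ with the corresponding quotient of $\widehat B/(\varphi)$ through bound-quiver presentations.

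The main obstacle will be the necessity direction, and within it two points are delicate: first, producing the Galois covering in the representation-infinite setting (this is exactly what forces the appeal to \cite{DS1,DS} rather than to Gabriel's original covering theory \cite{G}), and second, controlling the socle deformation so that one obtains socle equivalence rather than isomorphism. The first point requires a careful choice of the fundamental group of the presentation of $A/\soc A$ and verification that the pushdown functor is dense on indecomposables up to the finitely many exceptional tubes; the second point is where the characteristic of $K$ enters and where the analogue of the $\charact(K)\neq 2$ remark after Theorem~\ref{th:2.1} breaks down, necessitating the whole series \cite{BoS1,BoS2,BoS3,LS1} to produce the exceptional socle-deformed families explicitly and to verify that no further cases arise.
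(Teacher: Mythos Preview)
The paper states Theorem~\ref{th:2.2} without proof, deferring to \cite{BoS1,BoS2,BoS3,LS1} and \cite{S3}; the comparison below is with that body of work and with the deformation machinery of Sections~\ref{sec:3}--\ref{sec:4}.

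Your sufficiency argument contains a genuine error. You claim that socle equivalence of selfinjective algebras ``forces the stable categories $\umod A$ and $\umod(\widehat B/(\varphi))$ to be equivalent as $K$-categories'' and then appeal to stable invariance of representation type. But the paper explicitly records, just after Theorem~\ref{thm:2.4}, that socle equivalence does \emph{not} imply stable equivalence in general (Rickard's example). The correct transfer avoids stable categories entirely: for a selfinjective algebra every indecomposable non-projective module is annihilated by the socle (otherwise an indecomposable injective would split off), so the non-projective indecomposables over $A$ and over $\widehat B/(\varphi)$ are precisely the indecomposables over the common quotient $A/\soc A$, and domesticity and representation-infiniteness pass through this bijection directly.

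For necessity your outline also departs from the actual route. You speak of ``lifting the inclusion $B\hookrightarrow A$'', but $B$ arises as a \emph{quotient} $A/I$ (with $I$ the annihilator of the slice module), not as a subalgebra; the map goes the other way, and what one constructs is the orbit-algebra presentation $A[I]\cong\widehat B/(\varphi\nu_{\widehat B})$ via the criterion of Theorem~\ref{criterion 1}, not a lift of an embedding. More substantially, the classification in \cite{BoS1,BoS2,BoS3,LS1} is not a single uniform covering-theoretic argument of the kind you sketch: \cite{S3} and \cite{LS1} handle the algebras that admit simply connected Galois coverings, while \cite{BoS1,BoS2,BoS3} dispose of the remaining domestic selfinjective algebras by explicit bound-quiver classification (symmetric special biserial, then weakly symmetric of Euclidean type, then all socle deformations). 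Your plan to run Dowbor--Skowro\'nski covering theory uniformly and only afterwards sort out socle deformations is not how the proof is achieved; the exceptional socle-deformed families are exhibited and verified case by case, which is exactly why the full series of papers is needed.
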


We would like to stress that, for any algebraically field $K$,
there are representation-infinite domestic selfinjective
algebras which are not orbit algebras of repetitive categories
of tilted algebras of Euclidean type 
(see \cite{BoS3} for description of these algebras).
We also point that the structure of the module categories
$\mod A$ of the representation-infinite domestic selfinjective
algebras $A$ follows from Theorem~\ref{th:2.2}
and the results proved in \cite{ANS} and \cite{S3}.

For an algebra $A$, the \emph{infinite radical} $\rad_A^{\infty}$ 
of $\mod A$ is the intersection of all powers $\rad_A^i$, $i \geq 1$,
of the radical $\rad_A$ of $\mod A$.
We note that, by a classical result of M.~Auslander, $\rad_A^{\infty} = 0$
if and only if $A$ is representation-finite.
The following theorem is a consequence of Theorem~\ref{th:2.2}
and the main result proved by O.~Kerner and A.~Skowro\'nski
in \cite[Theorem]{KS}.

\begin{theorem}
\label{th:2.3}
Let $A$ be a basic, connected, representation-infinite 
selfinjective algebra over an algebraically closed field $K$.
Then the infinite radical $\rad_A^{\infty}$  of $\mod A$
is nilpotent if and only if 
$A$ is socle equivalent to an orbit algebra $\wh{B}/(\varphi)$, 
where $B$ is a tilted algebra of Euclidean type and 
$\varphi$ is a strictly positive automorphism of $\wh{B}$.
\end{theorem}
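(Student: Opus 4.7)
The plan is to derive Theorem~\ref{th:2.3} by combining Theorem~\ref{th:2.2} with the Kerner--Skowro\'nski characterization in \cite{KS}. The essential observation is that the class of representation-infinite domestic selfinjective algebras coincides, over an algebraically closed field, with the class of representation-infinite selfinjective algebras whose infinite radical is nilpotent; once this equivalence is in hand, Theorem~\ref{th:2.3} becomes a direct rewriting of Theorem~\ref{th:2.2}.

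First I would invoke the main theorem of \cite{KS} to get the purely representation-theoretic equivalence: for a basic, connected, representation-infinite selfinjective algebra $A$ over $K$, the infinite radical $\rad_A^{\infty}$ of $\mod A$ is nilpotent if and only if $A$ is domestic. This is the nontrivial input; it rests on analyzing the shapes of components of $\Gamma_A$ and controlling morphisms between them by uniform bounds on the nilpotency degree of $\rad_A^{\infty}$, ruling out tubular or wild behavior.

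With this equivalence established, the two directions of Theorem~\ref{th:2.3} are immediate. For the implication $(\Leftarrow)$, if $A$ is socle equivalent to an orbit algebra $\widehat{B}/(\varphi)$ with $B$ tilted of Euclidean type and $\varphi$ strictly positive, then by Theorem~\ref{th:2.2} the algebra $A$ is representation-infinite domestic, and therefore $\rad_A^{\infty}$ is nilpotent by \cite{KS}. For the implication $(\Rightarrow)$, if $\rad_A^{\infty}$ is nilpotent, then $A$ is representation-infinite domestic by \cite{KS}, so Theorem~\ref{th:2.2} supplies the desired presentation as a socle deformation of $\widehat{B}/(\varphi)$ for a tilted algebra $B$ of Euclidean type and a strictly positive automorphism $\varphi$ of $\widehat{B}$.

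The only real obstacle in this argument is the result of \cite{KS} itself, which I would take as a black box: both Theorem~\ref{th:2.2} and \cite[Theorem]{KS} are deep, and the role of the present theorem is simply to package them into a single intrinsic characterization involving only the module category of $A$. Once the reader accepts these two inputs, no further representation-theoretic work is required, and in particular no direct construction of the tilted algebra $B$ or the automorphism $\varphi$ needs to be carried out here.
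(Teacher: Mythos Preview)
Your proposal is correct and matches the paper's own treatment: the paper does not give a standalone proof but simply states that Theorem~\ref{th:2.3} is a consequence of Theorem~\ref{th:2.2} together with the main theorem of \cite{KS}. Your argument spells out precisely this combination, using \cite{KS} to identify nilpotency of $\rad_A^{\infty}$ with domesticity and then invoking Theorem~\ref{th:2.2}.
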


The classification of arbitrary non-domestic selfinjective algebras 
of polynomial growth was completed in the series of papers
\cite{BiS1}, \cite{BiS2}, \cite{BiS3}, \cite{LS2} 
(see \cite[Section~5]{S4} for details).
In particular, we have the following theorem.

\begin{theorem}
\label{th:2.4}
Let $A$ be a basic, connected, selfinjective algebra 
over an algebraically closed field $K$.
Then $A$ is non-domestic of polynomial growth if and only if 
$A$ is socle equivalent to an orbit algebra $\wh{B}/(\varphi)$, 
where $B$ is a tubular algebra and 
$\varphi$ is a strictly positive automorphism of $\wh{B}$.
\end{theorem}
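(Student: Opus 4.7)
The plan is to treat the two implications separately, following the inductive strategy of the series \cite{BiS1,BiS2,BiS3,LS2} but aiming to give a unified sketch.

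For the easier direction, I would start from an orbit algebra $\wh{B}/(\varphi)$ with $B$ a tubular algebra and $\varphi$ strictly positive, and argue that $A = \wh{B}/(\varphi)$ is non-domestic of polynomial growth. The key input is the explicit shape of $\mod \wh{B}$: since $B$ is tubular, $\mod B$ contains a $\bQ_{\geq 0}$-indexed family of separating tubular $\bP_1(K)$-families, and the gluing procedure of \cite{HW} produces in $\mod \wh{B}$ a $\bQ$-indexed family of $\bP_1(K)$-families of indecomposables. Because $G = (\varphi)$ is infinite cyclic and acts freely on the objects of $\wh{B}$, the Galois push-down functor $F_\lambda \colon \mod\wh{B} \to \mod A$ is dense and preserves one-parameter families, so the whole $\bQ$-indexed tubular structure descends to $A$; this gives quadratic (hence polynomial, non-linear) growth of the number of one-parameter families. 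Finally, socle equivalence $A/\soc A \cong A'/\soc A'$ induces an equivalence of the stable module categories $\umod A \simeq \umod A'$, and the counting of one-parameter families of non-projective indecomposables is a stable invariant, so every socle deformation of $\wh{B}/(\varphi)$ is again non-domestic of polynomial growth.

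For the converse, assume $A$ is non-domestic of polynomial growth. The plan is to construct, via Galois coverings, a locally bounded simply connected category whose orbit is closely related to $A$, and to identify this category with the repetitive category of a tubular algebra. First I would reduce to a situation where $A$ (or a closely related algebra, e.g.\ through its trivial extension or its stable structure) admits a Galois covering $\wh{B} \to \wh{B}/G$ by the Dowbor--Skowro\'nski theorems from \cite{DS1,DS}, using the polynomial-growth hypothesis to ensure that the covering group is infinite cyclic and, by Theorem~7.1 of \cite{SY8}, generated by a strictly positive automorphism $\varphi \nu_{\wh{B}}$. Next, analysing the Auslander--Reiten quiver $\Gamma_A$, I would single out a suitable section or slice whose endomorphism algebra is a candidate for $B$; the dichotomy between Euclidean-tilted and tubular slices is then forced by the non-domesticity assumption, which rules out the situation of Theorem~\ref{th:2.2} and leaves exactly the tubular case. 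The outcome is an algebra isomorphism $A \cong \wh{B}/(\varphi)$ modulo the obstruction encoded by the socle.

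The main obstacle, as in all earlier classification theorems of this kind, will be precisely this socle obstruction in small characteristic. In characteristic different from $2$ the constructed isomorphism class $\wh{B}/(\varphi)$ coincides with $A$, but in characteristic $2$ (and occasionally $3$ for tubular type $(2,2,2,2)$) there exist genuine socle deformations of $\wh{B}/(\varphi)$ that are not themselves orbit algebras of any repetitive category of a tubular algebra. Handling these requires a careful deformation-theoretic argument, showing that the quiver with relations of $A$ differs from that of the orbit algebra only by relations supported in the socle, so that $A/\soc A \cong \wh{B}/(\varphi)/\soc(\wh{B}/(\varphi))$; this is where the strength of socle equivalence (rather than isomorphism) is genuinely needed, and it is the technical heart of the papers \cite{BiS1,BiS2,BiS3,LS2}.
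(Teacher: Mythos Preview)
The paper does not give its own proof of Theorem~\ref{th:2.4}: it is quoted as a consequence of the series \cite{BiS1}, \cite{BiS2}, \cite{BiS3}, \cite{LS2}, with no argument beyond that citation. So there is no ``paper's proof'' to compare against; what remains is to assess whether your sketch is a faithful outline of what those papers actually do.

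There is a genuine gap in the easy direction. You assert that a socle equivalence $A/\soc A \cong A'/\soc A'$ induces an equivalence of stable categories $\umod A \simeq \umod A'$ and then transport the growth type along it. This implication is false in general, and the present paper says so explicitly (the remark after Theorem~\ref{thm:2.4}, citing Rickard via \cite{O}, \cite{SY9}). The conclusion you want is still true, but for a more elementary reason: for a selfinjective algebra every indecomposable non-projective module is annihilated by $\soc A$, so socle equivalent selfinjective algebras share the same class of non-projective indecomposables as $A/\soc A$-modules, and the count of one-parameter families (hence domesticity and polynomial growth) is unchanged. You should replace the stable-equivalence step by this direct argument.

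For the hard direction your outline is too schematic at the decisive point. The Dowbor--Skowro\'nski covering results \cite{DS1}, \cite{DS} do not by themselves produce a Galois covering of an arbitrary polynomial-growth selfinjective algebra by a repetitive category; one first needs the standardness results of \cite{S3} to obtain a simply connected Galois covering, and then a substantial structural analysis to identify that covering with some $\wh{B}$ for $B$ tubular. Moreover, tubular algebras are not detected by a ``slice'' in $\Gamma_A$ the way tilted algebras are; the actual arguments in \cite{BiS1,BiS2,BiS3,LS2} proceed by an explicit classification of the possible quivers with relations and their socle deformations, not by reading $B$ off a section. Your remark that characteristic $3$ also causes trouble is correct (the paper notes this after the theorem), but the mechanism in those papers is a case-by-case verification rather than a general deformation-theoretic statement, so ``handling these requires a careful deformation-theoretic argument'' understates how concrete and combinatorial that part of the proof is.
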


In the theorem, we may replace ``socle equivalence'' by ``isomorphism''
if $\charact(K)$ is different from $2$ and $3$ (see \cite{BiS3} for details).
We also mention that the structure of the module categories
$\mod A$ of non-domestic selfinjective algebras $A$ of
polynomial growth follows from Theorem~\ref{th:2.4}
and the results proved in \cite{NS} and \cite{S3}.

Recall that an algebra $A$ is called \emph{periodic}
if it is periodic with respect to action of the syzygy operator $\Omega_{A^e}$
in the module category $\mod A^e$ over the enveloping algebra
$A^e = A^{\op} \otimes_K A$.
It is known that if $A$ is periodic then $A$ is selfinjective
and all nonprojective indecomposable modules in $\mod A$
are periodic with respect to action of the syzygy operator
$\Omega_{A}$ in $\mod A$.
It is known that every non-simple, basic, connected, representation-finite 
selfinjective algebra $A$ is periodic, by a theorem proved
by A.~Dugas in \cite{Du}. 
The following theorem is a consequence of Theorem~\ref{th:2.4}
and the main result proved by 
J. Bia\l kowski, K. Erdmann and A. Skowro\'nski
in \cite[Theorem~1.1]{BES}.

\begin{theorem}
\label{th:2.5}
Let $A$ be a basic, connected, representation-infinite 
selfinjective algebra of polynomial growth 
over an algebraically closed field $K$.
Then $A$ is periodic if and only if 
$A$ is socle equivalent to an orbit algebra $\wh{B}/(\varphi)$, 
where $B$ is a tubular algebra and 
$\varphi$ is a strictly positive automorphism of $\wh{B}$.
\end{theorem}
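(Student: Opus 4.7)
The plan is to combine Theorem~\ref{th:2.4} with the main result of Bia\l kowski--Erdmann--Skowro\'nski \cite[Theorem~1.1]{BES}, which identifies, within the class of representation-infinite selfinjective algebras of polynomial growth over an algebraically closed field, periodicity with non-domesticity. Once this bridge is in place, the theorem becomes an equivalence of three conditions: $A$ is periodic $\iff$ $A$ is non-domestic (polynomial growth being assumed) $\iff$ $A$ is socle equivalent to $\wh{B}/(\varphi)$ with $B$ tubular and $\varphi$ strictly positive.

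For the ($\Leftarrow$) direction, I assume $A$ is socle equivalent to $\wh{B}/(\varphi)$ with $B$ a tubular algebra and $\varphi$ a strictly positive automorphism of $\wh{B}$. Theorem~\ref{th:2.4} then already places $A$ among the representation-infinite non-domestic selfinjective algebras of polynomial growth, and an appeal to \cite[Theorem~1.1]{BES} yields that $A$ is periodic.

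For the ($\Rightarrow$) direction, I suppose $A$ is periodic, representation-infinite, selfinjective, and of polynomial growth. The class of representation-infinite selfinjective algebras of polynomial growth splits into the domestic and the non-domestic parts. If $A$ were domestic, Theorem~\ref{th:2.2} would realize it as a socle deformation of $\wh{B'}/(\varphi')$ with $B'$ tilted of Euclidean type, and by \cite[Theorem~1.1]{BES} (equivalently, by inspecting $\Gamma_A$, which in this case contains components whose indecomposable modules are not $\Omega_A$-periodic) such an $A$ is not periodic, contradicting the hypothesis. Hence $A$ is non-domestic, and Theorem~\ref{th:2.4} produces the required presentation.

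The main obstacle is the implication \emph{non-domestic polynomial growth $\Rightarrow$ periodic}, which is not formal from Theorem~\ref{th:2.4} and is the substance of \cite[Theorem~1.1]{BES}: one must construct a periodic projective bimodule resolution of $\wh{B}/(\varphi)$ using the fact that the Coxeter transformation of a tubular algebra $B$ has finite order on the Grothendieck group, and then transport periodicity through the socle deformation from $\wh{B}/(\varphi)$ to $A$. The socle-deformation step is facilitated by the stable-equivalence results announced in Section~\ref{sec:7}, which reduce the question to the orbit algebra itself.
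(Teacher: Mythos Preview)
Your proposal is correct and matches the paper's approach exactly: the paper gives no proof beyond the one-line remark that Theorem~\ref{th:2.5} is a consequence of Theorem~\ref{th:2.4} and \cite[Theorem~1.1]{BES}, and your argument is precisely the natural unpacking of that attribution. The detour through Theorem~\ref{th:2.2} in the $(\Rightarrow)$ direction is harmless but unnecessary, since \cite[Theorem~1.1]{BES} already excludes the domestic case directly.
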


We also note that the algebras $B$ occurring in the above 
theorems are algebras of global dimension $1$ or $2$.


Recently, Theorems \ref{th:2.1}, \ref{th:2.2} and \ref{th:2.4}
have been applied by 
S. Ariki, R. Kase, K. Miyamoto and K. Wada
to provide in \cite{AKMW}
a complete classification of selfinjective 
cellular algebras of polynomial growth,
over algebraically closed fields $K$
with $\charact(K) \neq 2$.


We refer also to \cite{EKS} (respectively, \cite{LS3}) for the
representation theory and homological properties of the
orbit algebras $\widehat{B}/G$ of tilted algebras $B$ 
of wild type (respectively, quasi-tilted algebras $B$ of wild
canonical type).
These algebras $B$ are also of global dimension $1$ or $2$.

In general, the following problem arises naturally.

\begin{problem*}
Describe the basic, connected, selfinjective algebras 
over an arbitrary field $K$ which are socle
equivalent to an orbit algebra $\wh{B}/G$, 
where $B$ is an algebra of finite global dimension and 
$G$ is an infinite cyclic group generated by 
a strictly positive automorphism of $\wh{B}$.
\end{problem*}

This seems to be a very hard problem in general.
In the article, we present recent results concerning the above problem 
in the following case (coming naturally in several considerations):

\smallskip

\begin{itemize}
 \item
  \textit{$B$ is a tilted algebra and $G$ is generated by
  $\varphi\nu_{\widehat{B}}$
  for a positive automorphism $\varphi$ of $\wh{B}$.}
\end{itemize}

\smallskip

To illustrate the situation, let us consider the following simple example.

\begin{example}
Let $Q_n$ be the cyclic quiver 
\[
	1 \xrightarrow{\;\;\alpha_1\;\;} 2 \xrightarrow{\;\;\alpha_2\;\;} \cdots
	\xrightarrow{\alpha_{n-1}} n \xrightarrow{\;\;\alpha_n\;\;} 1, \;\; n>1,
\]
and $A_n=KQ_n/N$, where $KQ_n$ is the path algebra and $N$ is the ideal
generated by the compositions of all consecutive three arrows.
Let $A=A_n$ for simplicity. 
Then $A$ is a selfinjective Nakayama algebra with $\rad^3(A) =0$.
Take the right module $M=M_1\oplus M_2$ where $M_1=e_1A/\rad^2(e_1A)$
and $M_2=e_2A/\rad e_2A$, and the right annihilator $I=r_A(M)$ of
$M$ in $A$. Then 
\[
	I=\rad^2 (e_1A) \oplus \rad (e_2A) \oplus e_3A \oplus 
	\cdots \oplus e_nA.
\]
 The factor algebra $B=A/I$ is isomorphic to the 
path algebra of the quiver $\Delta$ of the form
\[
	1 \xrightarrow{\;\;\alpha_1\;\;} 2
\]
of Dynkin type $\mathbb{A}_2$.
 It is easily seen
that $A\cong \wh{B}/(\varphi\nu_{\wh{B}})$ for a positive automorphism
$\varphi$ of $\wh{B}$. 
For example, in the case $n=5$, we have 
$\varphi=\sigma\nu_{\wh{B}}$ where 
$\sigma$ is an automorphism satisfying 
$\sigma^2=\nu_{\wh{B}}$. 

Thus the selfinjective algebra $A_n$ with arbitrarily $n$ many simple modules
is always recovered from the path algebra $B=K\Delta$ and two
indecomposable modules $M_1$, $M_2$ lying on a slice of $\Gamma_{A_n}$.	
\end{example}

\section{Criterion theorems}
\label{sec:3}

We recall ring theoretical criterion theorems for a selfinjective algebra $A$
to be socle equivalent to an orbit algebra of tilted type. 
Throughout this section, $A$ denotes a selfinjective algebra and 
$\{e_1, \dots, e_r\}$ is a set of pairwise orthogonal primitive idempotents of $A$
with $1_A=e_1+ \cdots + e_r$.

For an algebra $\Lambda$ and a subset $X$ of a right or left $\Lambda$-module 
$M$, by we denote the right or left annihilator of $X$ in $\Lambda$ respectively,
\[
r_{\Lambda}(X)=\{\lambda \in \Lambda \mid X\lambda = 0\},
\quad
l_{\Lambda}(X)=\{\lambda \in \Lambda \mid \lambda X = 0\}.
\]

Let $I$ be an ideal of $A$ and $B=A/I$ the factor algebra. 
We can take an idempotent $e$ of $A$ such that $e+I$ is the identity 
of $B$, $e=e_1+ \cdots + e_n$ $(n \leq r)$ and $\{e_1, \dots, e_n\}$
is the set of all idempotents in $\{e_i \mid 1\leq i\leq r\}$ not contained in $I$.
Thus $e_1+I, \dots, e_n+I$ are pairwise orthogonal primitive idempotents of $B$
and $1_B= \bar{e}_1 + \cdots + \bar{e}_n$, where $\bar{e}_i=e_i+I \in B$
for $i\in \{1, \dots, n\}$.
By Krull-Schmidt theorem, such an idempotent $e$ is uniquely determined up to 
inner automorphism of $A$, and is called a \textit{residual identity} of 
$B=A/I$.

By a Nakayama's theorem \cite[Theorem IV.6.10]{SY9} 
the annihilator operations $l_A$ and $r_A$
induce a Galois correspondence between the lattice $\mathcal{R}_A$ of 
right ideals and the lattice $\mathcal{L}_A$ of left ideals of $A$:
$l_A: \mathcal{R}_A \to \mathcal{L}_A$,
$r_A: \mathcal{L}_A \to \mathcal{R}_A$,
each of which is the inverse to the other.
An important consequence of this Galois correspondence is the following 
statement on the residual identity
(see \cite[Proposition~2.3]{SY1} and \cite[Lemma~5.1]{SY6}).
\begin{lemma} \label{residual identity}
	Let $A$ be a selfinjective algebra and $I$ an ideal of $A$.
	Then an idempotent $e$ of $A$ is a residual identity of $B=A/I$ 
	if $\l_A(I)=eI$ or $r_A(I)=Ie$.
	Moreover, in this case, $\soc A \subseteq I$ and
	$l_{eAe}(I)=eIe=r_{eAe}(I)$.
\end{lemma}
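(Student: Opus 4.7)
The plan is to reduce to the first hypothesis $l_A(I)=eI$; the case $r_A(I)=Ie$ is symmetric by passage to the opposite algebra $A^{\op}$, under which $l_{A^{\op}}(I)=r_A(I)$ and the product $eI$ gets interchanged with $Ie$. Throughout I use Nakayama's theorem (Theorem IV.6.10 of \cite{SY9}) in two forms: the Galois correspondence $r_A\circ l_A=\id$ and $l_A\circ r_A=\id$ on one-sided ideals, and the dimension formula $\dim l_A(J)=\dim A-\dim J$ for any one-sided ideal $J$ of the basic selfinjective (hence Frobenius) algebra $A$. The key subtlety is that the double left annihilator of a two-sided ideal in a general selfinjective algebra is typically twisted by the Nakayama automorphism and need not return the ideal, so the Galois correspondence alone is not enough; one must exploit the very specific shape of the hypothesis $l_A(I)=eI$.

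To establish $1-e\in I$, I would first unpack $eI\subseteq l_A(I)$: for each $y\in I$ we have $(ey)I=0$, hence $e\cdot(yI)=0$, so $yI\subseteq(1-e)A$. Summing over $y$ yields the critical inclusion $I^{2}\subseteq(1-e)A$. Combining this with the fact that $l_A(I)=eI\subseteq eA$ is a two-sided ideal (so $A\cdot l_A(I)\subseteq l_A(I)\subseteq eA$), I would show $I\cdot l_A(I)=0$: for $z,y\in I$ the element $zey=(ze)y$ lies in $I^{2}\subseteq(1-e)A$, while simultaneously $z\cdot ey\in A\cdot l_A(I)\subseteq eA$, forcing $zey\in eA\cap(1-e)A=0$. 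From $I\cdot l_A(I)=0$ we get $I\subseteq l_A(l_A(I))$, and $\dim l_A(l_A(I))=\dim A-\dim l_A(I)=\dim I$ promotes this to the equality $l_A(l_A(I))=I$. Since $(1-e)\cdot eI=0$ trivially, we conclude $1-e\in l_A(l_A(I))=I$.

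For $\soc A\subseteq I$, I would argue one primitive idempotent at a time. Each $\soc(e_{i}A)$ is a simple right $A$-module, so $\soc(e_{i}A)\cap I$ is either $0$ or all of $\soc(e_{i}A)$. The first alternative forces $\soc(e_{i}A)\cdot I\subseteq\soc(e_{i}A)\cap I=0$ (using that $\soc(e_{i}A)$ is a right submodule and $I$ a two-sided ideal), hence $\soc(e_{i}A)\subseteq l_A(I)=eI\subseteq I$, contradicting the case assumption. Thus $\soc(e_{i}A)\subseteq I$ for every $i$, and $\soc A=\bigoplus_{i}\soc(e_{i}A)\subseteq I$.

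Finally, for $l_{eAe}(I)=eIe=r_{eAe}(I)$, the vanishing $I\cdot l_A(I)=0$ from the first step gives $l_A(I)\subseteq r_A(I)$, and Nakayama's dimension identity $\dim l_A(I)=\dim r_A(I)$ upgrades this to $r_A(I)=l_A(I)=eI$. It then remains to check that $eI\cap eAe=eIe$, which is immediate since $ey\in eAe$ forces $ey=eye$; intersecting each annihilator with $eAe$ therefore yields $l_{eAe}(I)=r_{eAe}(I)=eIe$.
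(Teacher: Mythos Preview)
The paper does not actually prove this lemma; it only cites \cite[Proposition~2.3]{SY1} and \cite[Lemma~5.1]{SY6}. So there is no in-paper argument to compare against, and one can only assess your proposal on its own merits.

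Your argument is essentially correct and efficient. The crux---trapping $zey$ simultaneously in $eA$ (via $A\cdot l_A(I)=l_A(I)=eI$) and in $(1-e)A$ (via $I^2\subseteq (1-e)A$) to force $I\cdot l_A(I)=0$---is clean, and the Frobenius dimension formula (valid since a basic selfinjective algebra is Frobenius) then gives $l_A(l_A(I))=I$ and $r_A(I)=l_A(I)=eI$. The socle argument and the identification $eI\cap eAe=eIe$ are fine as written.

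There is one small gap relative to the paper's definition of \emph{residual identity}. You establish only $1-e\in I$, i.e.\ $\bar e=1_B$; but the paper defines a residual identity to be (the conjugacy class of) the sum of those primitive idempotents from the fixed complete set $\{e_1,\dots,e_r\}$ which do \emph{not} lie in $I$. To finish, note first that no primitive summand $f$ of $e$ lies in $I$: otherwise $f=ef\in eI=l_A(I)$ gives $fI=0$, whence $f=f^2\in fI=0$. Dually every primitive summand $g$ of $1-e$ satisfies $g=g(1-e)\in I$. Since for a primitive idempotent $f$ one has $f\in I$ iff $fA\otimes_A B=0$, this property depends only on the isomorphism class of $fA$; comparing the decompositions $A=eA\oplus(1-e)A$ and $A=e'A\oplus(1-e')A$ (with $e'=e_1+\cdots+e_n$ the standard residual identity) via Krull--Schmidt then yields $eA\cong e'A$, hence $e$ is conjugate to $e'$. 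With this addendum your proof is complete.
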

Observe that the canonical correspondence $B \to eAe/eIe$, 
$a+I \mapsto eae+eIe$, is an algebra isomorphism. 

In the following first criterion theorem, the implication 
(ii) $\Rightarrow$ (i) was proved in \cite{SY3} and 
the converse in  \cite{SY6}.
\begin{theorem} \label{criterion 1}
	For a basic, connected, selfinjective algebra $A$
	over a field $K$, the following
	statements are equivalent:
	\begin{enumerate}[\upshape(i)]
		\item $A$ is isomorphic to an orbit algebra $\wh{B}/(\varphi\nu_{\wh{B}})$
			for an algebra $B$ and a positive automorphism $\varphi$ of $\wh{B}$.
		\item There is an ideal of $I$ of $A$ and an idempotent $e$ of $A$
		such that
		\begin{enumerate}[\upshape(a)]
			\item $r_A(I)=eI$,
			\item The canonical algebra homomorphism
			\[
			\rho : eAe \to eAe/eIe,\;eae \mapsto eae+eIe,
			\] is a retraction,
			that is, there is an algebra homomorphism $\eta: eAe/eIe \to eAe$ with
			$\rho\eta=id_{eAe/eIe}$.
		\end{enumerate}
	\end{enumerate}
\end{theorem}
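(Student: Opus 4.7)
The plan is to prove the two implications separately. The forward direction (i) $\Rightarrow$ (ii) is essentially a construction: given $A = \widehat{B}/(\varphi\nu_{\widehat{B}})$, I need to read off a suitable $I$ and $e$ from the orbit algebra structure. The reverse direction (ii) $\Rightarrow$ (i) is the substantive one and requires reconstructing $\widehat{B}$ and the automorphism $\varphi$ from the internal data of $A$ alone.

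For (i) $\Rightarrow$ (ii), I use that $\varphi\nu_{\widehat{B}}$ is strictly positive (since $\nu_{\widehat{B}}$ is strictly positive and $\varphi$ is positive), so every $G$-orbit on the objects of $\widehat{B}$ has a well-defined lowest level. I fix orbit representatives at level $0$, namely $e_{0,1},\dots,e_{0,n}$, let $e_i\in A$ denote the associated primitive idempotent, and set $e=e_1+\cdots+e_n$. Take $I$ to be the ideal of $A$ whose underlying space is the sum of the morphism spaces $\widehat{B}(e_{0,i},e_{r,j})$ with $r\geq 1$, modulo the orbit relation. Then $B\cong A/I\cong eAe/eIe$, the inclusion of level-$0$ morphisms defines the retraction $\eta$, and condition (a), $r_A(I)=eI$, follows from Nakayama duality in $\widehat{B}$: the right annihilator of the positive-degree ideal is exactly the top-degree part, which lies in $eI$ by the selfinjectivity of $\widehat{B}$.

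For the harder direction (ii) $\Rightarrow$ (i), the plan is to build $\widehat{B}$ from an internal filtration of $A$. Put $B=eAe/eIe\cong A/I$ and use the retraction $\eta$ to realise $B$ inside $eAe\subseteq A$. By Lemma \ref{residual identity} we have $\soc A\subseteq I$ and $eIe=r_{eAe}(I)=l_{eAe}(I)$; combined with finite-dimensionality, this yields a finite filtration $A\supseteq I\supseteq I^2\supseteq\cdots$ whose subquotients become $B$-bimodules via $\eta$. The essential step is to identify $I/I^2$ with a twist ${}_{1}D(B)_{\varphi}$ of the regular dual bimodule by a positive automorphism $\varphi$ of $B$ (which extends canonically to $\widehat{B}$). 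Once this is in hand, I define a $K$-linear functor $F\colon\widehat{B}\to A$ by sending level-$0$ morphisms to $\eta(B)\subseteq eAe$ and the $D(B)$-pieces to chosen lifts in $I$, then verify that $F$ factors through the equivalence induced by $\varphi\nu_{\widehat{B}}$ to produce the isomorphism $A\cong\widehat{B}/(\varphi\nu_{\widehat{B}})$.

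The main obstacle is precisely the bimodule identification $I/I^2\cong{}_{1}D(B)_{\varphi}$ and, more delicately, the coherent extension of this structure up the whole filtration so that $F$ becomes a well-defined algebra homomorphism. Condition (a) combined with selfinjectivity of $A$ must be leveraged to produce a nondegenerate pairing between $I/I^2$ and the socle (which lives in $I$), forcing the duality form on $I/I^2$; the positive automorphism $\varphi$ then arises as the discrepancy between the left and right $B$-actions on this dual bimodule after the retraction is applied. Making the construction canonical enough to glue across all powers of $I$, and ensuring that no extra relations appear beyond those imposed by $\varphi\nu_{\widehat{B}}$, is the central technical hurdle and relies essentially on the Nakayama Galois correspondence between $l_A$ and $r_A$ on the lattices of one-sided ideals.
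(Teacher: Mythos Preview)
The paper does not actually prove this theorem: it is a survey, and immediately before the statement it records that (ii) $\Rightarrow$ (i) was established in \cite{SY3} and the converse (i) $\Rightarrow$ (ii) in \cite{SY6}. So there is no in-paper argument to compare against, and I can only comment on your outline on its own terms.

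Your sketch of (i) $\Rightarrow$ (ii) is broadly right in spirit, though the claim that one may ``fix orbit representatives at level $0$'' is not literally correct when $\varphi$ is not rigid: there can be more $G$-orbits on objects of $\widehat{B}$ than objects of $B$, and the extra orbits have no level-$0$ representative. What is true is that the level-$0$ objects $e_{0,1},\dots,e_{0,n}$ lie in pairwise distinct orbits (since $\varphi\nu_{\widehat{B}}$ is strictly positive), and the sum of the corresponding primitive idempotents of $A$ is the correct $e$; it is just not all of $1_A$ in general. With that correction the construction of $I$, the retraction, and the verification of $r_A(I)=eI$ can be carried out along the lines you indicate.

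The substantive gap is in (ii) $\Rightarrow$ (i). You propose to identify $I/I^{2}$ with a twist ${}_{1}D(B)_{\varphi}$ for ``a positive automorphism $\varphi$ of $B$ (which extends canonically to $\widehat{B}$)''. But any automorphism of $\widehat{B}$ obtained this way is \emph{rigid}, whereas the theorem must produce an arbitrary \emph{positive} $\varphi$. Concretely, whenever $e\neq 1_A$ the quotient $B=A/I$ has strictly fewer primitive idempotents than $A$, and then no rigid $\varphi$ can work, since for rigid $\varphi$ the orbit algebra $\widehat{B}/(\varphi\nu_{\widehat{B}})$ has exactly as many simples as $B$. (The Nakayama example in Section~\ref{sec:2}, with $B$ of type $\mathbb{A}_2$ and $A=A_n$ for $n>2$, already exhibits this.) Hence $I/I^{2}$ cannot in general be a twist of $D(B)$ by an automorphism of $B$; the left and right $B$-actions do not differ by an automorphism of $B$ but encode a genuinely $\widehat{B}$-level shift. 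Your filtration idea is the right starting point, but the mechanism you describe for producing $\varphi$ collapses to the rigid case and does not cover the general statement. This is exactly where the argument in \cite{SY3} does the real work, and your outline does not yet contain a substitute for it.
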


It should be noticed that the statement (ii)(b) holds always in case $K$ is
algebraically closed. Under the statement (ii), $B$ in (i) may be chosen
as $A/I$, and the idempotent $e$ in (ii) is determined by $I$ as a residual
identity of $A/I$, see Lemma \ref{residual identity}.

It was observed that in many important situations we may
replace a selfinjective algebra $A$ by its socle deformation
satysfying the statement (ii)(b).
In order to explain this we need the notion of 
deforming ideal  from \cite{SY1}.

\begin{definition}
\label{deforming ideal}
	 Let $A$ be a selfinjective algebra, $I$ an ideal of $A$, and $e$ 
	 a residual identity of $A/I$. 
	 Then $I$ is said to be a \textit{deforming ideal}
	 of $A$ if the following conditions are satisfied:
	 \begin{flalign*}
	 	  (\textup{D1})& \quad 	l_{eAe}(I)=eIe=r_{eAe}(I), \hspace*{7cm}\\
	     (\textup{D2})&\quad  \text{the valued quiver} \; Q_{A/I} \;\text{of $A/I$ is acyclic}.
	 \end{flalign*}
\end{definition}

The following statement is an immediate consequence of 
Lemma~\ref{residual identity} and Definition~\ref{deforming ideal},
and is important 
for further considerations.

\begin{proposition}\label{prop.def.ideal}
	Let $A$ be a selfinjective algebra, $I$ an ideal of $A$ and $B=A/I$.
	Assume that 
	\begin{enumerate}[\upshape (i)]
		\item $r_{A}(I)=eI$ for an idempotent $e$ of $A$,
		\item the valued quiver $Q_{B}$ of $B$ is acyclic.
	\end{enumerate}
	Then 
	$I$ is a deforming ideal of $A$ and $e$ is a residual identity of $B$.
\end{proposition}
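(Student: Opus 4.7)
My plan is to verify directly the three statements we need: that $e+I = 1_B$ (so $e$ is a residual identity of $B$), that (D1) $l_{eAe}(I) = eIe = r_{eAe}(I)$ holds, and that (D2) holds. Statement (D2) is immediate from hypothesis (ii). The main tool for the other two is the Galois correspondence between right and left ideals of the selfinjective algebra $A$ given by Nakayama's theorem, together with one consequence of hypothesis (i), namely $IeI = I\cdot r_A(I) = 0$.

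First, to show $1 - e \in I$, I would compute
\[
(1-e)\cdot r_A(I) \;=\; (1-e)\cdot eI \;=\; [(1-e)e]\cdot I \;=\; 0,
\]
so $1-e \in l_A(r_A(I)) = I$, where the last equality uses the Galois correspondence applied to $I$ viewed as a left ideal. Combined with Krull-Schmidt uniqueness of the primitive decomposition up to inner automorphism, this exhibits $e$ as a residual identity of $B$.

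For (D1), the equality $r_{eAe}(I) = eAe \cap r_A(I) = eAe \cap eI = eIe$ is direct from the idempotent properties of $e$. The inclusion $eIe \subseteq l_{eAe}(I)$ follows from $IeI = 0$. For the reverse inclusion $l_{eAe}(I) \subseteq eIe$, my plan is to first establish $Ae \cap l_A(I) = Ie$: for $x = xe \in Ae$ one has $xI = x\cdot eI = x\cdot r_A(I)$, so $xI = 0$ forces $x \in l_A(r_A(I)) = I$, hence $x \in Ae \cap I = Ie$. Intersecting with $eAe$ and observing $Ie \cap eAe = eIe$ then completes (D1).

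The main delicate step is this reverse inclusion $l_{eAe}(I) \subseteq eIe$. It looks superficially asymmetric to the right case, since hypothesis (i) controls only $r_A(I)$ and not $l_A(I)$ a priori; the decisive trick is to factor the computation through $Ae$, where the identity $xI = x\cdot eI$ (valid because $x = xe$ on $Ae$) converts the left-annihilator question into one controlled by the hypothesis via the Galois correspondence $l_A(r_A(I)) = I$.
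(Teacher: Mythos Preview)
Your proof is correct and uses exactly the tool the paper relies on, namely the Nakayama Galois correspondence $l_A r_A = \id$, $r_A l_A = \id$ between left and right ideals of the selfinjective algebra $A$. The only difference is one of packaging: the paper does not argue directly but simply cites Lemma~\ref{residual identity}, which already records that $r_A(I)=eI$ forces $e$ to be a residual identity of $A/I$ together with $l_{eAe}(I)=eIe=r_{eAe}(I)$; condition (D2) is then hypothesis~(ii). Your argument is essentially an inline proof of that lemma: the computation $(1-e)r_A(I)=0\Rightarrow 1-e\in l_A(r_A(I))=I$ gives the residual identity, and your ``asymmetric'' step $xI=xeI=x\,r_A(I)$ for $x\in Ae$, combined again with $l_A(r_A(I))=I$, is precisely how one extracts the left annihilator statement $l_{eAe}(I)\subseteq eIe$ from the one-sided hypothesis on $r_A(I)$. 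So the route is the same; you have just unpacked the cited lemma.

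One small remark: your appeal to ``Krull--Schmidt uniqueness'' to conclude that $e$ is a residual identity is slightly compressed. What one really checks is that, decomposing $e=f_1+\cdots+f_m$ and $1-e=f_{m+1}+\cdots+f_r$ into primitive orthogonal idempotents, the $f_j$ with $j>m$ lie in $I$ (since $f_j=f_j(1-e)\in I$), while no $f_i$ with $i\leq m$ lies in $I$ (else $f_i\in eI=r_A(I)$ would give $f_i=f_i^2\in I f_i=0$). This exhibits $e$ as the sum of exactly those primitive idempotents not in $I$, which is the definition of residual identity.
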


Now assume that $I$ is a deforming ideal of 
a selfinjective algebra $A$. 
Then the canonical correspondence
$eAe/eIe\to A/I$, $eae+I \mapsto eae+I$, is an algebra isomorphism,
which makes $I$ as an $eAe/eIe$-bimodule and allows us to define
a new algebra, denoted by $A[I]$, as follows.
Let $A[I]$ be the direct sum of $K$-vector spaces 
$(eAe/eIe)\oplus I$ and define the multiplication 
in $A[I]$ by the rule
\begin{displaymath}
(b,x)\cdot (c,y)=(bc,by+xc+xy)
\end{displaymath}
for $b,c\in{eAe/eIe}$ and $x,y\in I$. 
Then $A[I]$ is actually a $K$-algebra with the identity $(e+eIe,1_{A}-e)$
and the ideal $\{(0, x)\mid x\in I\}$.
By identifying $x\in{I}$ with $(0,x)\in{A[I]}$,
we may regard $I$ as an ideal of $A[I]$, so that $A[I]/I$ denotes
the factor algebra of $A[I]$ by $I$ with the residual identity 
$e=(e+eIe,0)$. 
Thus, by identifying $e \in A$ with $(e,0)\in A[I]$,
we have 
\[
A[I]/I=eAe/eIe\cong A/I, \quad
eA[I]e=(eAe/eIe)\oplus eIe.
\]
Moreover, the canonical algebra 
epimorphism $eA[I]e\to eA[I]e/eIe$ is a retraction, which is obvious from
the definition but the main reason why the algebra $A[I]$ has been introduced.
In fact, $A[I]$ keeps other important properties of $A$ as shown in the next
theorem established in \cite[Theorem~4.1]{SY1}, \cite[Theorem~3]{SY2} 
and \cite[Lemma~3.1]{SY7}.

\begin{theorem}
	\label{thm:2.4}
	Let $A$ be a selfinjective algebra over a field $K$ and 	$I$ a deforming ideal 
	of $A$. 	Then the following statements hold.
	\begin{enumerate}[\upshape (i)]
		\item $A[I]$ is a selfinjective algebra with the same Nakayama permutation 
		as $A$ and $I$ is a deforming ideal of $A[I]$.
		\item $A$ and $A[I]$ are socle equivalent.
		\item $A$ and $A[I]$ are stably equivalent.
		\item $A[I]$ is a symmetric algebra if $A$ is a symmetric algebra.
	\end{enumerate}
\end{theorem}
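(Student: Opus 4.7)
Plan. I would prove the four parts in order, with (i) doing the real work and (ii)--(iv) following by direct bookkeeping. The central technical device is the deforming-ideal identity $l_{eAe}(I) = eIe = r_{eAe}(I)$, which says that the left and right $eAe$-actions on $I$ factor through $eAe/eIe \cong A/I$. This is what makes the multiplication rule $(b,x)(c,y) = (bc,\,by+xc+xy)$ on $A[I]$ associative, and it lets us compare the $A$-module and $A[I]$-module structures carried by the common decomposition $A = eAe \oplus eA(1-e) \oplus (1-e)Ae \oplus (1-e)A(1-e)$ versus $A[I] = (eAe/eIe) \oplus I$.

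For part (i), I would first produce a complete set of primitive orthogonal idempotents of $A[I]$: lift a complete set $\bar e_1,\dots,\bar e_n$ of primitive orthogonal idempotents of $eAe/eIe$ to $(\bar e_i,0) \in A[I]$, and take the remaining primitive idempotents $e_j$ of $A$ lying in $I$ as $(0,e_j)$. Selfinjectivity is then established by identifying, for each indecomposable projective $P$ of $A[I]$, a simple socle whose isomorphism class matches the Nakayama permutation of $A$: on the $I$-part this is automatic because the $eAe/eIe$-bimodule structure on $I$ is the same in $A$ and in $A[I]$, so the socles of $I$ viewed as a right $A$-module and as a right $A[I]$-module agree; on the $eAe/eIe$-part, the built-in retraction $eA[I]e \twoheadrightarrow eA[I]e/eIe$ forces the same simple tops and socles as in the $eAe$-part of $A$. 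The equalities $r_{A[I]}(I) = (\bar e,0)\,I$ and $l_{eA[I]e}(I) = eIe = r_{eA[I]e}(I)$ follow directly from the multiplication rule, so $I$ is again a deforming ideal of $A[I]$ with residual identity $(\bar e,0)$, and $A[I]$ has the same Nakayama permutation as $A$.

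For parts (ii) and (iii), the $K$-linear bijection $\Phi : A \to A[I]$ matching the four components of the decomposition descends to an algebra isomorphism $\bar\Phi : A/\soc A \xrightarrow{\sim} A[I]/\soc A[I]$: most cross-terms in $\Phi(ab)$ agree with those in $\Phi(a)\Phi(b)$ by the orthogonality $e(1-e)=0$, and the remaining discrepancy is forced into $\soc A[I]$ using the deforming-ideal identities (together with the inclusion $\soc A \subseteq I$ that they imply). Stable equivalence (iii) then follows from (ii) by the standard principle that socle-equivalent selfinjective algebras have equivalent stable module categories, the equivalence being induced by the common identification $A/\soc A \cong A[I]/\soc A[I]$ after discarding simple summands that are both projective and injective.

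For part (iv), if $A$ is symmetric with associative symmetric nondegenerate bilinear form $(-,-)_A$, transport it to $A[I] = (eAe/eIe)\oplus I$ by pairing $(b,x)$ with $(c,y)$ as $(b',c')_A + (x,y)_A$ plus the natural cross-terms, where $b',c' \in eAe$ are any lifts; the deforming-ideal identities together with associativity of $(-,-)_A$ make the pairing independent of the chosen lifts, associative for the new product, and nondegenerate. The main obstacle I anticipate is part (i), specifically matching the simple socles of indecomposable projectives of $A[I]$ with those of $A$ under the Nakayama permutation: this is where the deforming-ideal axioms have to be used in full, and the bookkeeping with the multiplication rule $(b,x)(c,y) = (bc, by+xc+xy)$ has to be kept tight so as not to lose track of which terms already live in the socle.
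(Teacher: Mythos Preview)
The paper does not prove this theorem; it simply cites the original sources \cite[Theorem~4.1]{SY1}, \cite[Theorem~3]{SY2}, \cite[Lemma~3.1]{SY7}. So there is no proof in the paper to compare against. That said, your proposal contains a genuine gap that the paper itself flags.

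Your deduction of (iii) from (ii) is wrong. You write that stable equivalence ``follows from (ii) by the standard principle that socle-equivalent selfinjective algebras have equivalent stable module categories.'' There is no such principle: socle equivalence of selfinjective algebras does \emph{not} imply stable equivalence in general. The paper states this explicitly in the paragraph immediately following the theorem (``a socle equivalence does not imply a stably equivalence in general, as pointed out by J.~Rickard''), citing \cite{O} and \cite{SY9}. So (iii) cannot be obtained as a formal corollary of (ii); in the original source \cite{SY2} it is proved by a separate argument that actually uses the specific structure of $A[I]$, not just the fact that $A/\soc A \cong A[I]/\soc A[I]$ as algebras. Concretely, knowing $A/\soc A$ tells you the non-projective indecomposables and the maps among them, but it does not by itself determine which maps factor through projectives, and that is exactly what the stable category records.

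Your sketches for (i), (ii), (iv) are along plausible lines, though (i) is where the serious work lies and your outline is still quite far from a proof: matching Nakayama permutations requires more than ``the $eAe/eIe$-bimodule structure on $I$ is the same,'' since you must also control what happens at the idempotents $e_j$ with $e_j \in I$ and show that each indecomposable projective $A[I]$-module has \emph{simple} socle. For (ii), note that your proposed four-block decomposition of $A$ does not line up in any obvious way with the two-summand decomposition $A[I] = (eAe/eIe)\oplus I$; the map you want should rather use $A/I \cong eAe/eIe$ on one piece and the identity on $I$ on the other, and then check that the discrepancy lands in the socle.
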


We note here that a socle equivalence does not imply a stably equivalence in general, 
as pointed out by J. Rickard (see \cite{O},  \cite{SY9}). Moreover, 
a selfinjective algebra $A$ with a deforming ideal $I$ is not always 
isomorphic to $A[I]$ (see~\cite[Example~4.2]{SY3}), but this is the case when
$K$ is an algebraically closed field  (\cite[Theorem~3.2]{SY1}).
\medskip

The following theorem proved in~\cite[Theorem~4.1]{SY3} 
shows the importance of the algebras  $A[I]$.

\begin{theorem}
	\label{thm:2.5}
	Let $A$ be a selfinjective algebra, $I$ an ideal of $A$, 
	$B=A/I$ and $e$ an idempotent of $A$. 
	Assume that $r_{A}(I)=eI$ and $Q_{B}$ is acyclic. 
	Then the following statements are true:
	\begin{enumerate}[\upshape (i)]
		\item 	$A[I]$ is isomorphic to an orbit algebra 
		$\widehat B/(\varphi\nu_{\widehat B})$ for some positive 
		automorphism $\varphi$ of $\widehat B$.
		\item $A$ is a socle deformation of 	$\widehat B/(\varphi\nu_{\widehat B})$ 
		for some positive automorphism $\varphi$ of $\widehat B$.
   \end{enumerate}
\end{theorem}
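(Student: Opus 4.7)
The plan is to deduce the theorem from the criterion Theorem \ref{criterion 1} applied to the deformed algebra $A[I]$. First I would apply Proposition \ref{prop.def.ideal} to our hypotheses: the conditions $r_{A}(I)=eI$ and $Q_{B}$ acyclic immediately yield that $I$ is a deforming ideal of $A$ and that $e$ is a residual identity of $B=A/I$. Lemma \ref{residual identity} then supplies $l_{eAe}(I)=eIe=r_{eAe}(I)$, so the $eAe$-bimodule structure on $I$ descends to the $eAe/eIe$-bimodule structure used to construct $A[I]$. Theorem \ref{thm:2.4} next provides everything we need at the level of algebras: part (i) gives that $A[I]$ is selfinjective with $I$ still a deforming ideal, and part (ii) gives a socle equivalence between $A$ and $A[I]$. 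Consequently statement (ii) of the theorem will follow from (i) as soon as (i) is established.

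To prove (i), I would apply Theorem \ref{criterion 1} to $A[I]$, with the ideal $I\subseteq A[I]$ (identified with $\{(0,x):x\in I\}$) and idempotent $e=(e+eIe,0)$. Two conditions must be verified. Condition (ii)(b), that the projection $eA[I]e\to eA[I]e/eIe$ splits as a map of algebras, is immediate from the construction: the decomposition $eA[I]e=(eAe/eIe)\oplus eIe$ together with the multiplication rule $(b,x)(c,y)=(bc,by+xc+xy)$ shows that $\bar{c}\mapsto(c,0)$ is an algebra section. Condition (ii)(a), namely $r_{A[I]}(I)=eI$ inside $A[I]$, is the main computation and is where I expect the main obstacle to lie. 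Here, for an element $(\bar{u},y)\in A[I]$ with $u\in eAe$ and $y\in I$, the identity $(0,x)(\bar{u},y)=(0,xu+xy)=0$ for all $x\in I$ rewrites as $x(u+y)=0$, i.e.\ $u+y\in r_A(I)=eI$. A short analysis using the deforming condition $eAe\cap I=eIe$ then forces $\bar{u}=0$ and $y\in eI$, yielding the desired equality.

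Having verified both hypotheses, Theorem \ref{criterion 1} produces an algebra $B'$ and a positive automorphism $\varphi$ of $\widehat{B'}$ together with an isomorphism $A[I]\cong\widehat{B'}/(\varphi\nu_{\widehat{B'}})$; the remark following that theorem identifies $B'$ with $A[I]/I$. Since $A[I]/I\cong eAe/eIe\cong A/I=B$, we may take $B'=B$, which proves (i). Combined with the socle equivalence $A$ and $A[I]$ from the first paragraph, this expresses $A$ as a socle deformation of $\widehat{B}/(\varphi\nu_{\widehat{B}})$, giving (ii). The whole argument is essentially bookkeeping organised by the preceding lemmas, with the sole genuine calculation being the annihilator identity $r_{A[I]}(I)=eI$.
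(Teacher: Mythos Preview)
Your proposal is correct and follows essentially the same route as the paper: invoke Proposition~\ref{prop.def.ideal} to make $A[I]$ well defined, reduce (ii) to (i) via Theorem~\ref{thm:2.4}(ii), and establish (i) by verifying conditions (a) and (b) of Theorem~\ref{criterion 1} for the pair $(A[I],I)$, finally identifying the resulting algebra with $B=A/I$. The paper's own proof merely asserts $r_{A[I]}(I)=eI$ and the retraction property without detail, so your explicit verification (in particular the annihilator computation using $eAe\cap I=eIe$) fills in what the paper leaves to the reader.
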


\begin{proof}
	First note that, by Proposition~\ref{prop.def.ideal}, the ideal $I$ is 
	a deforming ideal of $A$, so $A[I]$ is well defined. 
	
	(ii) follows from (i) and
	Theorem \ref{thm:2.4} (ii). To show (i),  we apply Theorem \ref{criterion 1} 
	to $A[I]$ and its ideal $I$.
	In fact, it is seen that $r_{A[I]}(I)=eI$ in $A[I]$, and the canonical
	algebra homomorphism $eA[I]e \to eA[I]/eIe$ is a retraction.
	As a result, the conditions (a) and (b) in (ii) of Theorem \ref{criterion 1}
	are satisfied, which ensures the existence of an algebra isomorphism
	$A[I] \to \wh{B}/(\varphi\nu_{\wh{B}})$ for a positive automorphism
	$\varphi$ of $\wh{B}$. 
\end{proof}

In the theorem, the algebra $A$ is not necessarily isomorphic to 
an orbit algebra $\widehat{B}/(\varphi\nu_{\widehat{B}})$, 
where $B$ is an algebra and $\varphi$ is a positive automorphism 
of $\widehat{B}$ (see~\cite[Proposition~4]{SY4}).

The following result proved in~\cite[Proposition~3.2]{SY5} 
describes a situation when the algebras $A$ and $A[I]$ are isomorphic.

\begin{theorem}
\label{thm:3.7}
Let $A$ be a selfinjective algebra 
over a field $K$,
having a deforming ideal $I$, $B=A/I$,
$e$ be a residual identity of $B$, 
and $\nu$ the Nakayama permutation of $A$. 
Assume that $IeI=0$ and $e_{i}\neq e_{\nu(i)}$, 
for any primitive summand $e_{i}$ of $e$. 
Then the algebras $A$ and $A[I]$ are isomorphic. 
In particular, $A$ is isomorphic to an orbit algebra 
$\widehat B/(\varphi\nu_{\widehat B})$ for some 
positive automorphism $\varphi$ of $\widehat B$.
\end{theorem}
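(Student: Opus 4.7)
My plan is to produce an explicit $K$-algebra isomorphism $\Phi : A[I] \xrightarrow{\sim} A$; the ``In particular'' clause then follows immediately from Theorem~\ref{thm:2.5}(i). The construction of $\Phi$ reduces to producing a $K$-algebra section $\sigma : B \to eAe$ of the canonical projection $\rho$, where $B = A/I$ is identified with $eAe/eIe$ as in the discussion following Definition~\ref{deforming ideal}. Given such a $\sigma$, the $K$-linear map $\Phi(b,x) = \sigma(b) + x$, for $b \in eAe/eIe$ and $x \in I$, is a bijection, and a direct calculation shows that it is multiplicative if and only if $\sigma$ is, since the $B$-bimodule structure on $I$ built into $A[I]$ is by construction the one arising from multiplication by any lift of $B$ into $eAe$.

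Since $IeI = 0$, the ideal $eIe$ of $eAe$ satisfies $(eIe)^2 \subseteq e(IeI)e = 0$, so the extension
\[
0 \to eIe \to eAe \to B \to 0
\]
is a square-zero extension of $K$-algebras. I would build $\sigma$ by lifting the standard generators of $B$ along $\rho$: take $\bar{e}_i \mapsto e_i$ (the primitive summand of $e$ already lies in $eAe$ and maps to $\bar e_i$ under $\rho$), and lift each arrow of $Q_B$ to some preimage in $eAe$. Since $Q_B$ is acyclic, every relation of $B$ is expressible as a linear combination of paths that must vanish; any failure of the chosen lifts to satisfy these relations would be absorbed in the square-zero kernel $eIe$, so the crux is to show that the Nakayama hypothesis rules out such absorption.

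This is where $e_i \neq e_{\nu(i)}$ enters, via a socle-obstruction argument. Using $IeI = 0$ one first shows that $Ie$ is a two-sided ideal of $A$ and that $l_A(I) = Ie$, $r_A(I) = eI$ (the latter from the deforming-ideal setup, the former via Nakayama duality for the selfinjective $A$). Hence every nonzero $y \in e_i I e_j$ generates a cyclic right submodule $yA \subseteq Ie$ of $e_i A$, so the simple socle $\soc(e_i A) = e_i\,\soc(A)\,e_{\nu(i)}$ lies in $Ie \subseteq Ae$. A nonzero element of $e_i\,\soc(A)\,e_{\nu(i)}$ lies in $Ae$ precisely when $e_{\nu(i)}$ is a primitive summand of $e$, so the hypothesis places a tight restriction on the possible $y$. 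A symmetric left-module argument using $\soc(Ae_j) = e_{\nu^{-1}(j)}\,\soc(A)\,e_j$ gives a corresponding restriction on the other index, and together these rule out the possibility that a product of lifted $B$-generators acquires an unwanted $eIe$-component, so the lifted subalgebra of $eAe$ closes up and supplies $\sigma$.

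The main obstacle will be making this socle-obstruction argument rigorous: carefully extracting from the Nakayama condition enough control over all the spaces $e_i I e_j$ to ensure that products of lifted $B$-generators never acquire an $eIe$-contribution. Once $\sigma$ is in hand, checking that $\Phi$ is a bijective algebra homomorphism is routine, and the orbit-algebra presentation in the ``In particular'' clause then follows directly from Theorem~\ref{thm:2.5}(i) applied to $A[I] \cong A$.
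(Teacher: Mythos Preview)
The paper does not actually prove Theorem~\ref{thm:3.7}; it is quoted from \cite[Proposition~3.2]{SY5} without argument, so there is no in-paper proof to compare against.

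Your overall architecture is correct and is precisely the shape that Theorem~\ref{criterion 1} suggests: from $IeI=0$ and the deforming-ideal axioms one does get $r_A(I)=eI$ (your parenthetical ``from the deforming-ideal setup'' hides a short computation, but it goes through: $1-e\in I$ forces $r_A(I)\subseteq eA$, and then $r_A(I)\cap eAe=r_{eAe}(I)=eIe$ finishes it), and once a multiplicative section $\sigma$ of $\rho:eAe\to eAe/eIe$ exists, the map $\Phi(b,x)=\sigma(b)+x$ is an algebra isomorphism $A[I]\xrightarrow{\sim}A$; the ``In particular'' clause then follows from Theorem~\ref{thm:2.5}(i). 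So the reduction is sound.

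The gap is exactly where you flag it, and as written the socle-obstruction sketch does not close it. From a nonzero $y\in e_iIe_j$ you correctly deduce $\soc(e_iA)\subseteq yA\subseteq Ie$, hence $e_{\nu(i)}$ is a summand of $e$; the hypothesis then says only that $\nu(i)\neq i$. That is a constraint on the \emph{support} of $eIe$, but it does not show $e_i(eIe)e_j=0$ for the pairs $(i,j)$ that carry the relations of $B$, nor does it visibly annihilate the obstruction class in $HH^2(B,eIe)$ to splitting the square-zero extension. The sentence ``the hypothesis places a tight restriction on the possible $y$'' is where the argument stops doing work: you have not yet connected $\nu(i)\neq i$ to the acyclic order on $Q_B$ in a way that forces the lifted relations to hold. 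Two further issues to watch: over a general field $K$ the ``lift each arrow'' step is more delicate than it sounds, since $Q_B$ is a \emph{valued} quiver and the division rings $e_iBe_i$ need not be $K$; and adjusting lifts of arrows by elements of $eIe$ can move the defect of one relation while creating a defect in another, so a one-relation-at-a-time argument will not suffice without a global cohomological statement. To make the plan go through you will need a sharper structural fact about $eIe$ as a $B$--$B$--bimodule (for example, pinning it inside $e\,\soc(A)\,e$ and then using $\nu(i)\neq i$ together with the acyclicity of $Q_B$ to rule out nonzero components at the sources and sinks of relations), rather than the single-element socle chase you outline.
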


\section{Hereditary stable slice}
\label{sec:4}

In this section we explain a new characterization from \cite{SY12}
of the socle deformations
of selfinjective orbit algebras $\wh{B}/G$ of tilted type where $G$ is an
admissible group generated by $\varphi\nu_{\wh{B}}$ for a positive 
automorphism $\varphi$ of $\wh{B}$.

Let $A$ be a selfinjective algebra over a field $K$ and 
$\Gamma_A^s$ the stable Auslander-Reiten quiver of $A$, obtained from
$\Gamma_A$  by removing the projective modules and the arrows 
attached to them. 
By $\ind \mathcal{P}(A)$ we understand 
the family of indecomposable projective modules in $\mod A$.

Following \cite{SY10}, 
a full valued subquiver $\Delta$ of $\Gamma_A$ is said to be a
\emph{stable slice} if the following conditions are satisfied:
\begin{enumerate}[\upshape (1)]
	\item
	$\Delta$ is connected, acyclic, and without projective modules.
	\item
	For any valued arrow $V \xrightarrow{(a,a')} U$ in $\Gamma_A$
	with $U$ in $\Delta$ and $V$ non-projective,
	$V$ belongs to $\Delta$ or to $\tau_A\Delta$.
	\item
	For any valued arrow $U \xrightarrow{(b,b')} V$ in $\Gamma_A$
	with $U$ in $\Delta$ and $V$ non-projective,
	$V$ belongs to $\Delta$ or to $\tau_A^{-1}\Delta$.
\end{enumerate}

A stable slice $\Delta$ of $\Gamma_A$ is said to be \textit{right regular}
(respectively, \textit{left regular}) if $\Delta$ does not contain the radical
$\rad P$ (respectively, the socle factor $P/\soc P$) of any $P$ from 
$\ind \mathcal{P}(A)$.
A stable slice $\Delta$ is said to be \textit{almost right regular} if for any $P$
from $\ind \mathcal{P}(A)$ with $\rad P$ lying on $\Delta$, $\rad P$ is
a sink of $\Delta$, and \textit{almost left regular} if for any 
$P\in\ind \mathcal{P}(A)$ with $P/\soc P$ lying on $\Delta$, $P/\soc P$ is
a source of $\Delta$.
Moreover, for a finite stable slice $\Delta$, we denote by $M(\Delta)$ the direct sum 
of all modules lying on $\Delta$ and $H(\Delta)=\End_A(M(\Delta))$ the 
endomorphism algebra of $M(\Delta)$.
Then a finite stable slice $\Delta$ of $\Gamma_A$ is said to be \emph{hereditary} 
if the endomorphism algebra $H(\Delta) = \End_A(M(\Delta))$
of $M(\Delta)$ is a hereditary algebra and its valued quiver $Q_{H(\Delta)}$
is the opposite quiver $\Delta^{\op}$ of $\Delta$.

The following theorem is proved in \cite[Theorem~1.1]{SY12} and
extends results established in \cite{SY3} and \cite{SY10} to a general case, 
as explained in the next section.

\begin{theorem}
	\label{th:main}
	Let $A$ be a selfinjective algebra over a field $K$.
	The following statements are equivalent:
	\begin{enumerate}[\upshape (i)]
		\item
		$\Gamma_A$ admits a hereditary almost right regular stable slice.
		\item
		$\Gamma_A$ admits a hereditary almost left regular stable slice.
		\item
		$A$ is socle equivalent to an orbit algebra 
		$\widehat{B}/(\varphi\nu_{\widehat{B}})$,
		where $B$ is a tilted algebra and 
		$\varphi$ is a positive automorphism of $\widehat{B}$.
	\end{enumerate}
\end{theorem}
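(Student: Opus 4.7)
The plan is to prove the equivalence by establishing (i) $\Leftrightarrow$ (iii) directly, and obtaining (ii) $\Leftrightarrow$ (iii) by a duality argument: passing to the opposite algebra $A^{\op}$ reverses all arrows of $\Gamma_A$ and, via the $K$-duality $D$ on modules, exchanges $\rad P$ with $P/\soc P$ for each indecomposable projective-injective $P$ (since $D(P/\rad P)=\soc DP$ for selfinjective $A$). This interchanges almost right regular slices in $\Gamma_A$ with almost left regular slices in $\Gamma_{A^{\op}}$, and preserves the class of orbit algebras $\widehat{B}/(\varphi\nu_{\widehat{B}})$ of tilted type (opposites of tilted algebras are tilted, and the Nakayama automorphism transports correspondingly). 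So proving (i) $\Leftrightarrow$ (iii) suffices.

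For (iii) $\Rightarrow$ (i), I would first apply Theorem~\ref{thm:2.4}(iii) to reduce to the orbit algebra $A'=\widehat{B}/(\varphi\nu_{\widehat{B}})$ itself, since a socle equivalence induces a stable equivalence $\umod A\cong \umod A'$ and hence an isomorphism $\Gamma_A^s\cong \Gamma_{A'}^s$, while the notion of a stable slice is intrinsic to the stable AR-quiver. For tilted $B=\End_H(T)$, the Hughes-Waschb\"usch description of $\mod \widehat{B}$ (and its extensions) provides a canonical infinite hereditary section inside $\Gamma_{\widehat{B}}$ identified with $\mod H$. Under the Galois covering $\widehat{B}\to A'$, the admissible cyclic group generated by $\varphi\nu_{\widehat{B}}$ folds this section onto a finite connected acyclic full valued subquiver $\Delta$ of $\Gamma_{A'}$ with endomorphism algebra $H(\Delta)=H$ and $Q_H=\Delta^{\op}$. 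The Nakayama-shift factor $\nu_{\widehat{B}}$ in the covering group places the projective-injective $\widehat{B}$-modules so that, after descent, any $\rad P$ landing on $\Delta$ does so as a sink, giving the almost right regular condition.

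The hard direction is (i) $\Rightarrow$ (iii). Given a hereditary almost right regular stable slice $\Delta$, set $M=M(\Delta)$ and $H=H(\Delta)$. The idea is to produce an ideal $I$ of $A$ and an idempotent $e$ of $A$ meeting the hypotheses of Theorem~\ref{thm:2.5}, and furthermore such that $B:=A/I$ is a \emph{tilted} algebra of the form $\End_H(T)$ for an $H$-tilting module $T$ built from $M$ and the projective-injective boundary of $\Delta$. Concretely, I would take $I$ to be a trace-type two-sided ideal associated with $M$ (generated by the morphisms of $A$ factoring through summands of $M$), and $e$ a residual identity of $A/I$. Then I would verify: (a) $r_A(I)=eI$, using self-injectivity of $A$, Lemma~\ref{residual identity}, and the sink structure guaranteed by almost right regularity; (b) the valued quiver $Q_{A/I}$ is acyclic, inheriting acyclicity from $\Delta$; (c) $A/I\cong \End_H(T)$ for a tilting $H$-module $T$ constructed from $M$ together with the summands coming from the sinks $\rad P$ on $\Delta$. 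Once (a)--(c) are in hand, Proposition~\ref{prop.def.ideal} promotes $I$ to a deforming ideal and Theorem~\ref{thm:2.5}(ii) yields that $A$ is socle equivalent to $\widehat{B}/(\varphi\nu_{\widehat{B}})$ with $B=A/I$ tilted.

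The main obstacle is step (c): identifying $B=A/I$ as the endomorphism algebra of a \emph{tilting} $H$-module, not merely of some $H$-module. The hereditary condition on $\Delta$ supplies $H$ abstractly, but producing an explicit tilting $T\in \mod H$ together with an algebra isomorphism $A/I\cong \End_H(T)$ is where the almost right regular hypothesis becomes indispensable, as the controlled appearance of the radicals $\rad P$ as sinks of $\Delta$ is exactly what furnishes the extra indecomposable summands needed to extend $M$ to a complete tilting $H$-module with the correct endomorphism algebra. A secondary technical difficulty is the annihilator bookkeeping in (a), where the Nakayama duality on $\mod A$ must be combined with the geometry of $\Delta$ near the projective-injective modules of $A$; Lemma~\ref{residual identity} and Proposition~\ref{prop.def.ideal} then handle the remaining routine verifications once $I$ and $e$ are correctly chosen.
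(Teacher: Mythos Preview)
Your overall architecture --- duality for (i)$\Leftrightarrow$(ii), and Theorem~\ref{thm:2.5} as the engine for (i)$\Rightarrow$(iii) --- matches the paper. But there are two genuine gaps.

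\textbf{Wrong choice of $I$ in (i)$\Rightarrow$(iii).} The paper takes $I=r_A(M)$, the \emph{right annihilator} of $M=M(\Delta)$ in $A$, not a trace ideal. This is essential: with $I=r_A(M)$ the module $M$ becomes a faithful $B$-module for $B=A/I$, and one then shows (in \cite{SY12}) that $M$ is a tilting $B$-module with $\End_B(M)=H$ hereditary, whence $B$ is tilted of type $H$. A trace-type ideal gives the wrong quotient --- one in which maps through $M$ vanish rather than one on which $M$ acts faithfully --- and neither $r_A(I)=eI$ nor the identification of $B$ as a tilted algebra would follow. Your step~(c) also inverts the logic: one does not enlarge $M$ to a tilting $H$-module by adjoining summands from the sinks; rather $M$ itself is the tilting module over $B$, and tiltedness of $B$ comes from $H=\End_B(M)$.

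\textbf{The direction (iii)$\Rightarrow$(i) is not uniform.} First, your reduction to $\Lambda=\wh{B}/(\varphi\nu_{\wh{B}})$ is mis-argued: socle equivalence does \emph{not} imply stable equivalence in general (see the remark after Theorem~\ref{thm:2.4}), and Theorem~\ref{thm:2.4}(iii) concerns $A$ versus $A[I]$, not an arbitrary socle deformation. The correct reduction is direct: non-projective indecomposables over a selfinjective algebra are annihilated by the socle, so a stable slice and its endomorphism algebra transfer along $A/\soc A\cong\Lambda/\soc\Lambda$. More seriously, pushing down a Hughes--Waschb\"usch section via the Galois covering gives no control over where the radicals $\rad P$ land, so ``almost right regular'' is unjustified. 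The paper closes this by a genuine case split: for $A$ representation-infinite, one locates an acyclic generalized standard right stable translation subquiver (closed under successors) and any stable slice there is right regular and hereditary; for $A$ a selfinjective Nakayama algebra, one takes the sectional path $\soc P\to\cdots\to\rad P$ (with $\rad P$ the unique sink); for $A$ representation-finite and not Nakayama, one chooses $P$ with $P/\soc P$ not a radical, builds $\Delta_P$ from sectional successors of $P/\soc P$, and shows $\tau_A(\Delta_P)$ is hereditary and right regular. Without this trichotomy the implication does not close.
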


It should be stressed that, if $K$ is algebraically closed, 
we may replace in {\upshape{(iii)}} ``socle equivalent'' by ``isomorphic'',
but the replacement is not possible in general. 

\begin{proof}[Idea of the proof]
The equivalence of (i) and (ii) is easy, because a selfinjective algebra $A$ satisfies
(i) if and only if $A^{\op}$ satisfies (ii).
Similarly, $A$ satisfies (iii) if and only if
$A^{\op}$ satisfies (iii) in view of the canonical algebra isomorphism
\[
(\wh{B}/(\varphi\nu_{\wh{B}}))^{\op} 
\cong \wh{B^{\op}}/(\psi\nu_{\wh{B^{\op}}})
\]
for a positive automorphism $\psi$ of $\wh{B^{\op}}$.
\medskip

Now we explain 
how to define the algebra $B$ in (iii) under the assumption (i) and conversely
how to find an almost right regular stable slice being hereditary in (i) under the
condition (iii).

Assume that a selfinjective algebra $A$ has a hereditary and almost right regular
stable slice $\Delta$ in the Auslander-Reiten quiver $\Gamma_A$.
Take the direct sum $M$ of all indecomposable modules in $\mod A$ lying on
$\Delta$, and let $I$ be the right annihilator 
$r_A(M)=\{a\in A\mid Ma=0 \}$ of $M$, $B=A/I$ the factor algebra of 
$A$,  and $H=\End_A(M)$.
By the definition of hereditary stable slice, $H$ is a hereditary algebra
and the valued quiver $Q_H$ of $H$ is the opposite quiver $\Delta^{\op}$
of $\Delta$.
Then it is shown 
in \cite[Section~3]{SY12}
that $B$ is a desired tilted algebra and $I$ a deforming ideal 
of $A$
satisfying $r_A(I)=eI$, so that Theorem~\ref{thm:2.5} implies the assertion (iii).

Conversely, assume that a selfinjective algebra $A$ satisfies the assertion (iii).
We shall show how we find a hereditary and almost right regular stable 
slice $\Delta$ in $\Gamma_A$.
Let $A$ be  socle equivalent to 
$\wh{B}/(\varphi\nu_{\wh{B}})$ as in (iii), and
$\Lambda=\wh{B}/(\varphi\nu_{\wh{B}})$.

We observe that, if $\Lambda$ has a stable slice $\Delta$ in 
$\Gamma_{\Lambda}$ being hereditary and almost right regular, then
$A$ has a hereditary almost right regular stable slice, which 
corresponds to $\Delta$ under the given isomorphism 
$A/ \soc (A) \to \Lambda/\soc (\Lambda)$, 
so $A$ may be identified with $\Lambda$,
that is, $A=\wh{B}/(\varphi\nu_{\wh{B}})$.
We consider two cases:

\smallskip

(1) Assume $A$ is of infinite representation type.
Since $B$ is not of finite representation type, by \cite{HW}, \cite{Ho},
the tilted algebra $B$ is not of Dynkin type. 
It follows from general theory developed in \cite{ANS}, \cite{EKS}, \cite{SY3}
and \cite{SY4}
that $\Gamma_A$ admits an acyclic component $\mathcal{C}$ containing
a right stable full translation subquiver $\mathcal{D}$ which is closed
under successors in $\mathcal{C}$ and generalized standard
(see below for definition).
Then any stable slice $\Delta$ in $\mathcal{D}$ is  
a \textit{hereditary} and \textit{right regular} stable slice. 

\smallskip

(2) Assume $A$ is of finite representation type.

\smallskip

\quad  
(a) First consider the case when $A$ is a selfinjective Nakayama algebra.
 Then, for any indecomposable projective module $P$,  
 the module $P/\soc P$  is always the radical of a projective module.
 For an arbitrarily chosen indecomposable projective module $P$, 
 consider the sectional path 
\[
\Delta:\;\; 
\soc P= X_1 \to X_2 \to \cdots \to X_{n-1} \to X_n = \rad P
\]
 of canonical irreducible monomorphisms.
Then $\Delta$ is a \textit{hereditary} and \textit{almost right
regular} stable slice $\Gamma_A$.

\smallskip

\quad  
(b)
Next assume that $A$ is
of finite representation type but not a Nakayama algebra. 
Observe that then 
there exists an indecomposable projective module $P$ such that 
$P/\soc P$ is not the radical of a projective module,
which allows us to take the full subquiver $\Delta_P$ of $\Gamma_A$
whose vertices are $\tau_A^{-1}(P/\soc P)$ and the indecomposable
modules $X$ such that there is a non-trivial sectional path in $\Gamma_A^s$
from $P/\soc P$ to $X$.
It is shown in \cite[Proposition~4.4]{SY12} that 
$\Delta=\tau_A(\Delta_P)$ is a \textit{hereditary}
and \textit{right regular} stable slice \cite[Theorem~3.1]{JPS}.
\end{proof}

We recall some definitions from \cite{SY10}.
Let $A$ be a selfinjective $K$-algebra. 
A stable slice $\Delta$ of $\Gamma_A$
is said to be \textit{semi-regular} if $\Delta$ is left or right regular, and
\textit{regular} if $\Delta$ is left and right regular.
Moreover, a stable slice $\Delta$ is \textit{double $\tau_A$-rigid}
if $\Hom_A(X, \tau_A Y)=0$ and $\Hom_A(\tau_A^{-1} X, Y)=0$
for all indecomposable modules $X$ and $Y$ from $\Delta$.
Note that a double $\tau_A$-rigid stable slice $\Delta$ is finite 
(\cite{S94}, \cite[Lemma VIII.5.3]{ASS})
and a full
valued subquiver of a connected component $\mathcal{C}$ of 
$\Gamma_A^s$ intersecting every $\tau_A$-orbit in $\cC$ exactly once.
The following  theorem proved in \cite[Theorem~1]{SY10}
is a special case of Theorem \ref{th:main}.
\begin{theorem} \label{double rigid}
	Let $A$ be a basic, connected, selfinjective algebra over a field $K$.
	The following statements are equivalent:
	\begin{enumerate}[\upshape (i)]
		\item $\Gamma_A$ admits a semi-regular double $\tau_A$-rigid stable
		slice.
		\item $A$ has one of the following forms:
		\begin{enumerate}[\upshape (a)]
			\item $A \cong \wh{B}/(\varphi\nu_{\wh{B}})$ as algebras, where
			$B=\End_H(T)$ for a hereditary algebra $H$ and a tilting module $T$
			in $\mod H$ either without nonzero projective direct summand
			or without nonzero injective direct summand, and $\varphi$ is
			a strictly positive automorphism of $\wh{B}$.
			\item $A$ is socle equivalent to 
			$\wh{B}/(\varphi\nu_{\wh{B}})$, where
			$B=\End_H(T)$ for a hereditary algebra $H$ and a tilting module $T$
			in $\mod H$ without nonzero projective 
			or injective direct summands, 
			and $\varphi$ is a rigid automorphism of $\wh{B}$.
		\end{enumerate}
	\end{enumerate}
		Moreover, if $K$ is an algebraically closed field, 
we may replace in {\upshape{(ii)(b)}} ``socle equivalent'' by ``isomorphic''. 
\end{theorem}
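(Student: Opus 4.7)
The plan is to deduce Theorem \ref{double rigid} from Theorem \ref{th:main} by showing that the semi-regular double $\tau_A$-rigid condition on a stable slice is a strengthening of the hereditary almost right regular (respectively left regular) condition, and that the refined dichotomy (a) versus (b) in the conclusion can be read off from which semi-regularity holds and from the positivity type of $\varphi$.

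First I would establish (i) $\Rightarrow$ (ii). A semi-regular slice $\Delta$ is by definition either right regular or left regular, and either property trivially implies the corresponding ``almost'' version needed in Theorem \ref{th:main}. To see that $\Delta$ is hereditary, I would use that $A$ is selfinjective, so the ideal of maps factoring through projectives coincides with that of maps factoring through injectives, and the Auslander--Reiten formula gives
\[
D\Ext^1_A(Y,X) \cong \underline{\Hom}_A(\tau_A^{-1} X, Y).
\]
Double $\tau_A$-rigidity of $\Delta$ then forces $\Ext^1_A(X,Y) = 0 = \Ext^1_A(Y,X)$ for all $X, Y$ lying on $\Delta$, and combined with the sectional nature of $\Delta$ inside $\Gamma_A$ (no mesh relation crosses $\Delta$), this forces $H(\Delta) = \End_A(M(\Delta))$ to be hereditary with valued quiver $\Delta^{\op}$. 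Theorem \ref{th:main} then yields that $A$ is socle equivalent to $\wh{B}/(\varphi\nu_{\wh{B}})$ with $B = \End_H(T)$ tilted and $\varphi$ positive. I would then verify that right regularity of $\Delta$ (no $\rad P$ on $\Delta$) translates under the orbit covering to $T$ having no nonzero injective summand, and symmetrically left regularity (no $P/\soc P$ on $\Delta$) translates to $T$ having no nonzero projective summand; this places us in case (a) when $\varphi$ is strictly positive, and in case (b) when $\varphi$ is rigid and both types of summand are absent.

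For the converse (ii) $\Rightarrow$ (i), in both cases I would take a canonical tilting section inside a connecting component of $\Gamma_{\wh{B}}$ associated to $T$ and push it down along the Galois covering $\wh{B} \to \wh{B}/(\varphi\nu_{\wh{B}})$, further transporting along the socle equivalence in case (b) — a transport which is legitimate because a stable slice lies entirely in the stable Auslander--Reiten quiver and this quiver is preserved under socle equivalence of selfinjective algebras with the same Nakayama permutation (Theorem \ref{thm:2.4}(i)). The standard tilting properties $\Ext^1_H(T,T) = 0$ and $\Hom_H(T, \tau_H T) = 0$ translate, under the orbit construction, into double $\tau_A$-rigidity of the image slice, while the absence of the relevant direct summands of $T$ ensures the corresponding semi-regularity.

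The main obstacle will be in precisely matching the regularity type of $\Delta$ with the pair $(T, \varphi)$: understanding why strict positivity of $\varphi$ is compatible with isomorphism $A \cong \wh{B}/(\varphi\nu_{\wh{B}})$ in case (a), whereas a rigid $\varphi$ forces only socle equivalence in case (b). This reflects how the orbit action of $\varphi\nu_{\wh{B}}$ on the connecting component interacts with which radical or socle-factor modules of projectives can be avoided by $\Delta$; the rigid case is genuinely a deformation, where the algebra $A[I]$ from Theorem \ref{thm:2.5} need not coincide with $A$ outside the algebraically closed setting, and the final assertion that ``socle equivalent'' can be strengthened to ``isomorphic'' over algebraically closed fields will be obtained by invoking \cite[Theorem~3.2]{SY1}.
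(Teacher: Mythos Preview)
The paper does not actually prove this theorem: it is quoted verbatim from \cite[Theorem~1]{SY10} and merely flagged as ``a special case of Theorem~\ref{th:main}''. So there is no in-paper argument to compare against, and your plan of \emph{deriving} it from Theorem~\ref{th:main} is already more ambitious than what the survey does.

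That said, the derivation you outline has two genuine gaps. First, the step ``double $\tau_A$-rigidity $\Rightarrow$ $H(\Delta)$ hereditary'' is not justified by the Auslander--Reiten formula alone. The vanishing of $\Ext^1_A$ between summands of $M(\Delta)$ does \emph{not} force $\End_A(M(\Delta))$ to be hereditary; what is needed is that $\rad_A^{\infty}(X,Y)=0$ for $X,Y$ on $\Delta$ (generalized standardness of the slice), so that every nonzero morphism is a sum of compositions along paths in $\Delta$ with no relations. This is the substantive point in \cite{SY10}, and ``sectional nature'' by itself does not deliver it.

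Second, and more seriously, Theorem~\ref{th:main} only produces a \emph{socle equivalence} $A \simeq \wh{B}/(\varphi\nu_{\wh{B}})$ with $\varphi$ \emph{positive}. The entire refinement in (ii)---the dichotomy between strictly positive $\varphi$ with a genuine isomorphism in~(a) and rigid $\varphi$ with only socle equivalence in~(b), together with the conditions on projective/injective summands of $T$---is precisely the content that goes \emph{beyond} Theorem~\ref{th:main}. You correctly flag this as ``the main obstacle'', but then do not resolve it: the proposed translation ``right regular $\leftrightarrow$ $T$ has no injective summand'' and the claim that strict positivity of $\varphi$ forces isomorphism rather than mere socle equivalence both require the detailed structural analysis carried out in \cite{SY10} (using, among other things, the explicit shape of the connecting component of $\wh{B}$ and the interplay with Theorem~\ref{thm:3.7}). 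As written, your sketch reproduces only the coarse conclusion of Theorem~\ref{th:main} and leaves the finer statement unproved.
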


\section{Selfinjective orbit algebras of infinite representation type}
\label{sec:5}

The first study of the socle deformations of a selfinjective orbit algebra
over an arbitrary field $K$
was given in \cite{SY1}, \cite{SY3}, 
where representation-infinite selfinjective algebras are mainly considered. 
In this section, we present two theorems on representation-infinite
socle deformations of a selfinjective orbit algebra of a tilted type.
	
Following \cite{S2}, a full translation subquiver $\Sigma$ 
of the Auslander-Reiten quiver $\Gamma_{\Lambda}$
of an algebra $\Lambda$
is said to be \textit{generalized standard} if 
$\bigcap_{m>0} \rad^m \Sigma =0$.
The following theorem is from \cite{SY1} and \cite{SY3}.
	
\begin{theorem}
Let $A$ be a basic, connected, selfinjective algebra over a field $K$. 
The following statements are equivalent:
\begin{enumerate}[\upshape(i)]
 \item
  $\Gamma_A$ admits an acyclic generalized standard right stable
  full translation subquiver $\Sigma$ which is closed 
  under successors in $\Gamma_A$.
 \item
  $\Gamma_A$ admits an acyclic generalized standard left stable
  full translation subquiver $\Omega$ which is closed 
  under predecessors in $\Gamma_A$.
 \item $A$
  is socle equivalent to an orbit algebra 
  $\wh{B}/(\varphi\nu_{\wh{B}})$, where  $B$ is a tilted algebra
  not of Dynkin type 
  and $\varphi$ is a positive
  automorphism of $\wh{B}$.
\end{enumerate}
Moreover, if $K$ is an algebraically closed field, 
we may replace in {\upshape{(iii)}} ``socle equivalent'' by ``isomorphic''. 
\end{theorem}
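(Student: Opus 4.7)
The plan is to reduce the theorem to Theorem~\ref{th:main}. First, the equivalence (i) $\Leftrightarrow$ (ii) is a duality argument: the standard duality $D=\Hom_K(-,K)$ induces an isomorphism of valued translation quivers $\Gamma_A \cong \Gamma_{A^{\op}}^{\op}$ which swaps $\tau_A$ with $\tau_{A^{\op}}^{-1}$, right stable with left stable, and closed under successors with closed under predecessors, while preserving both acyclicity and the generalized standard property. Statement (iii) is self-dual via the isomorphism $(\widehat{B}/(\varphi\nu_{\widehat{B}}))^{\op} \cong \widehat{B^{\op}}/(\psi\nu_{\widehat{B^{\op}}})$ used in the proof of Theorem~\ref{th:main}, so it suffices to prove (i) $\Leftrightarrow$ (iii).

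For (iii) $\Rightarrow$ (i), as in the proof of Theorem~\ref{th:main} the stable Auslander--Reiten quivers of $A$ and $\Lambda=\widehat{B}/(\varphi\nu_{\widehat{B}})$ correspond under the socle equivalence, so it suffices to exhibit $\Sigma$ inside $\Gamma_{\Lambda}$. The description of $\Gamma_{\widehat{B}}$ from \cite{HW}, \cite{ANS} and \cite{EKS} displays the connecting components of $\Gamma_B$ inside $\Gamma_{\widehat{B}}$; since $B$ is tilted but not of Dynkin type, the right stable part of such a connecting component is infinite, acyclic, generalized standard, and closed under successors in $\Gamma_{\widehat{B}}$. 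The quotient by the infinite cyclic group $(\varphi\nu_{\widehat{B}})$, which acts freely with infinite orbits because $\varphi\nu_{\widehat{B}}$ is strictly positive, then projects this subquiver injectively to a subquiver $\Sigma$ of $\Gamma_{\Lambda}$ with all the required properties.

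For (i) $\Rightarrow$ (iii), I would extract from $\Sigma$ a hereditary almost right regular stable slice $\Delta$ and then invoke Theorem~\ref{th:main}. Concretely, take $\Delta$ to be a connected section of $\Sigma$ meeting each $\tau_A$-orbit of $\Sigma$ exactly once, which exists since $\Sigma$ is acyclic, right stable and closed under successors. Acyclicity together with generalized standardness of $\Sigma$ force $\Hom_A(X,\tau_A Y)=0$ for all $X,Y\in\Delta$, so that $H(\Delta)=\End_A(M(\Delta))$ is hereditary with valued quiver $\Delta^{\op}$, i.e.\ $\Delta$ is hereditary; right stability of $\Sigma$ makes $\Delta$ right regular and hence almost right regular. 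Theorem~\ref{th:main} then yields a socle equivalence of $A$ with $\widehat{B}/(\varphi\nu_{\widehat{B}})$ for some tilted $B$, and the non-Dynkin type of $B$ follows from the infinitude of $\Sigma$ forced by right stability together with closure under successors (a Dynkin tilted $B$ would give a representation-finite orbit algebra whose stable components are finite and cyclic, excluding any such $\Sigma$).

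The main obstacle is the extraction of a genuinely hereditary stable slice $\Delta$ from $\Sigma$: one has to exploit generalized standardness to show that compositions of irreducible maps along sectional paths in $\Sigma$ give linearly independent morphisms spanning $\End_A(M(\Delta))$, so that $H(\Delta)$ is a hereditary path algebra rather than a proper quotient of one. A secondary subtlety, for the direction (iii) $\Rightarrow$ (i), is placing $\Sigma$ so as to avoid the images of the indecomposable projectives of $\Lambda$ inside $\Gamma_{\Lambda}$; here the strict positivity of $\varphi\nu_{\widehat{B}}$ keeps the socle and top objects of $\Lambda$ separated from the interior of the component containing $\Sigma$. Finally, over an algebraically closed field, the replacement of ``socle equivalent'' by ``isomorphic'' in (iii) follows from \cite[Theorem~3.2]{SY1}, exactly as in the earlier theorems of this article.
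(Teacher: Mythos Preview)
Your approach is essentially the one the paper indicates. The paper does not give a self-contained proof of this theorem (it is attributed to \cite{SY1} and \cite{SY3}), but the paragraph immediately following it sketches the same route you take for (i) $\Rightarrow$ (iii): pass from $\Sigma$ to a section, form the annihilator ideal $I=\ann_A\Sigma$ (which coincides with the annihilator of any section), show that $I$ is a deforming ideal with $r_A(I)=eI$ and $l_A(I)=Ie$, and then invoke the orbit-algebra machinery---the paper explicitly phrases this as ``in order to apply Theorem~\ref{th:main}''. Your (iii) $\Rightarrow$ (i) via the component structure of $\Gamma_{\widehat{B}}$ is likewise exactly what appears as case~(1) in the proof sketch of Theorem~\ref{th:main}. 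The obstacle you single out---that generalized standardness is precisely what forces $H(\Delta)$ to be hereditary with quiver $\Delta^{\op}$ rather than a proper quotient---is indeed the substantive point, and is what \cite{SY1} establishes.
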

	
	In order to apply Theorem \ref{th:main}, let $\Sigma$ be the generalized
	standard translation subquiver given in the statement (i), and let
	$I$ be the annihilator $\ann_A \Sigma$ of $\Sigma$ in $A$ and $B=A/I$.
	Then it is shown 
	in \cite{SY1}
	that $I$ is a deforming ideal of $A$ such that 
	\[
	r_A(I)= eI \quad \text{and} \quad l_A(I)=Ie ,
	\]
	for a residual identity $e$ of $B$.
	It should be noted that $\ann_A(\Sigma)$ is the same as the annihilator
	in $A$ of any section in $\Sigma$.
	\medskip

	The following theorem from \cite[Theorem 2]{SY10} 
	shows another characterization of the socle deformations
	of a representation-infinite orbit algebra 
	$\wh{B}/(\varphi\nu_{\wh{B}})$ of tilted type,
and is a consequence of Theorem~\ref{double rigid}.
	
\begin{theorem}
\label{infinite rep}
Let $A$ be a basic, connected, selfinjective
algebra of infinite representation type over a field $K$.
The following statements are equivalent:
\begin{enumerate}[\upshape (i)]
 \item
	$\Gamma_A$ admits a regular double $\tau_A$-rigid stable slice.
 \item
	$A$ is socle equivalent to an orbit algebra 
	$\widehat{B}/(\varphi\nu_{\widehat{B}})$,
	where $B$ is a tilted algebra not of Dynkin type and 
	$\varphi$ is a positive automorphism of $\widehat{B}$.
\end{enumerate}
Moreover, if $K$ is an algebraically closed field, 
we may replace in {\upshape{(ii)}} ``socle equivalent'' by ``isomorphic''. 
\end{theorem}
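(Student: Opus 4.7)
The strategy is to reduce Theorem \ref{infinite rep} directly to Theorem \ref{double rigid}, using that ``regular'' strengthens ``semi-regular'' on the slice side and that the restriction to infinite representation type on the algebra side corresponds precisely to excluding Dynkin type from $B$.

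For (i) $\Rightarrow$ (ii), I would invoke Theorem \ref{double rigid} directly: by definition a regular double $\tau_A$-rigid stable slice is both left regular and right regular, in particular semi-regular. Thus $A$ is either isomorphic to an orbit algebra $\wh{B}/(\varphi\nu_{\wh{B}})$ as in case (a), or socle equivalent to one as in case (b). In both cases $B = \End_H(T)$ is a tilted algebra, and $\varphi$ is positive, since both rigid and strictly positive automorphisms are positive by the definitions of Section \ref{sec:2}. To finish, I would exclude the Dynkin case: if $B$ were tilted of Dynkin type, the Hughes--Waschb\"usch description of $\mod \wh{B}$ (see \cite{HW}, \cite{Ho}) would imply that $\wh{B}/(\varphi\nu_{\wh{B}})$ is representation-finite, and by Theorem \ref{thm:2.4}(iii) socle equivalent selfinjective algebras are stably equivalent and therefore share the same representation type, contradicting the infinite representation type of $A$. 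The algebraically closed refinement from ``socle equivalent'' to ``isomorphic'' is then inherited verbatim from the corresponding clause in Theorem \ref{double rigid}.

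For (ii) $\Rightarrow$ (i), suppose $A$ is socle equivalent to $\wh{B}/(\varphi\nu_{\wh{B}})$ with $B = \End_H(T)$ tilted of non-Dynkin type and $\varphi$ positive. I would first place $A$ into one of the two cases of Theorem \ref{double rigid}(ii): using that $H$ is hereditary of non-Dynkin type and therefore admits ``enough'' tilting modules, I may replace the representative $(H, T)$ within its tilting equivalence class so that $T$ has no nonzero injective summand (when $\varphi$ is strictly positive, landing in case (a)) or neither nonzero projective nor nonzero injective summand (when $\varphi$ is rigid, landing in case (b)). Theorem \ref{double rigid} then supplies a semi-regular double $\tau_A$-rigid stable slice. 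The main task is to upgrade this to a regular slice, and for this I would use that, since $B$ is of non-Dynkin type, $\Gamma_A$ contains an acyclic generalized standard translation subquiver extending unboundedly in both $\tau_A$-directions (exactly as exploited in step (1) of the proof idea of Theorem \ref{th:main}). Shifting the slice by a sufficiently high power of $\tau_A$ inside such a subquiver moves it away from both the radicals $\rad P$ and the socle factors $P/\soc P$ of every indecomposable projective $P \in \ind \mathcal{P}(A)$, hence yields left and right regularity simultaneously. Because being hereditary and being double $\tau_A$-rigid are invariant under translation along $\tau_A$-orbits inside a generalized standard acyclic translation subquiver, the shifted slice still satisfies the required finiteness and rigidity properties.

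The main obstacle I anticipate is precisely this last upgrade: one must verify that, starting from the semi-regular slice produced by Theorem \ref{double rigid}(ii)(a) when $T$ still has (say) a projective direct summand on the opposite side, a single well-chosen $\tau_A$-translate inside the acyclic generalized standard subquiver lands simultaneously away from all $\rad P$ and all $P/\soc P$. Handling this uniformly for both sub-cases of (a) requires the unboundedness of the acyclic component in both $\tau_A$-directions, which is where the non-Dynkin hypothesis on $B$ is essential; this is the only place where the hypothesis of infinite representation type genuinely enters the argument.
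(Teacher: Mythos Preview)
Your overall strategy matches the paper's: Theorem~\ref{infinite rep} is stated there simply as ``a consequence of Theorem~\ref{double rigid}'', with the detailed argument left to \cite[Theorem~2]{SY10}. Your (i)$\Rightarrow$(ii) direction is clean and correct; the exclusion of Dynkin type via representation-finiteness of $\wh{B}/(\varphi\nu_{\wh{B}})$ together with invariance of representation type under socle equivalence is exactly what is needed.

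For (ii)$\Rightarrow$(i), however, your route takes an unnecessary and partly unjustified detour. You propose to first force $A$ into one of the two shapes of Theorem~\ref{double rigid}(ii) by replacing the pair $(H,T)$ so that $T$ has no projective and/or injective summand, then extract a semi-regular slice, and finally upgrade it to regular by $\tau_A$-translation. The replacement step is the weak link: while it is true that a tilted algebra $B$ of non-Dynkin type admits such presentations (essentially because the connecting component of $\Gamma_B$ contains complete slices avoiding all projective and injective vertices), you do not argue this, and in the rigid case it requires changing $H$ as well as $T$. More to the point, the detour is redundant. Your own ``upgrade'' argument already contains the direct proof: since $B$ is not of Dynkin type, $\Gamma_A$ has an acyclic generalized standard full translation subquiver (Theorem~5.1, or step~(1) in the proof sketch of Theorem~\ref{th:main}), and because only finitely many modules $\rad P$ and $P/\soc P$ with $P\in\ind\mathcal{P}(A)$ lie in it, any section sufficiently far along the $\tau_A$-orbits avoids them all and is therefore \emph{regular}; generalized standardness then gives double $\tau_A$-rigidity directly. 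So you can drop the intermediate appeal to Theorem~\ref{double rigid}(ii) entirely for this implication and go straight to the slice construction --- which is in effect what the paper's surrounding results (Theorem~5.1 and the proof of Theorem~\ref{th:main}) set up.
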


\section{Selfinjective orbit algebras of finite representation type}
\label{sec:6}

In this section, we present recent results concerning
the structure of selfinjective algebras of finite representation type,
which are socle equivalent to selfinjective orbit algebras
of tilted algebras of Dynkin type.

We first recall the following old result proved by
C.~Riedtmann \cite{Ried} and G.~Todorov \cite{T}
describing the structure of the stable Auslander-Reiten
quiver of a selfinjective algebra of finite representation
type (see \cite[Theorem~IV.15.6]{SY9} for a proof).

\begin{theorem}
Let $A$ be a non-simple, basic, connected, selfinjective 
algebra of finite representation type over a field $K$.
Then the stable Auslander-Reiten quiver $\Gamma_A^s$ of $A$
is isomorphic to the orbit valued translation quiver 
$\bZ \Delta / G$, where $\Delta$ is a Dynkin quiver
of type 
$\bA_n(n\geq1)$,
$\bB_n(n\geq2)$,
$\bC_n(n\geq3)$,
$\bD_n(n\geq4)$,
$\bE_6$,
$\bE_7$,
$\bE_8$,
$\bF_4$
and
$\bG_2$,
and $G$ is an admissible infinite cyclic group of automorphisms 
of $\bZ \Delta$.
\end{theorem}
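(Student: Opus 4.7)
The plan is to reduce the statement to Riedtmann's general structure theorem for stable translation quivers and then to classify the tree class via a bounded subadditive function.

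First, I would verify that $\Gamma_A^s$ is a finite stable valued translation quiver. Since $A$ is selfinjective, $\tau_A$ and $\tau_A^{-1}$ are defined on all indecomposable non-projective modules and send non-projectives to non-projectives; hence $\tau_A$ acts as a bijection on the vertices of $\Gamma_A^s$ and no arrows beyond those incident to projective vertices are removed. Because $A$ is of finite representation type, the vertex set of $\Gamma_A^s$ is finite.

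Second, I would invoke Riedtmann's structure theorem: every connected component $\mathcal{C}$ of a stable valued translation quiver is isomorphic to an orbit quiver $\bZ T_{\mathcal{C}}/\Pi_{\mathcal{C}}$, where $T_{\mathcal{C}}$ is the tree class of $\mathcal{C}$ (a directed valued tree, unique up to isomorphism of the underlying valued graph) and $\Pi_{\mathcal{C}}$ is an admissible group of automorphisms of $\bZ T_{\mathcal{C}}$. To identify $T_{\mathcal{C}}$ I would use the length function $\ell(M)=\dim_K M$: for each almost split sequence $0\to\tau_A M\to E\to M\to 0$ with $M$ non-projective one has $\dim_K\tau_A M+\dim_K M=\dim_K E$, which descends to a positive subadditive function on $T_{\mathcal{C}}$ (subadditivity rather than additivity arising because projective-injective summands of $E$ are deleted when passing to the stable part). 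Because $\Gamma_A^s$ is finite, this subadditive function is bounded, so by the Happel-Preiser-Ringel classification of valued graphs admitting a bounded positive subadditive function, $T_{\mathcal{C}}$ is forced to be a Dynkin graph of one of the listed types $\bA_n,\bB_n,\bC_n,\bD_n,\bE_6,\bE_7,\bE_8,\bF_4,\bG_2$.

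Third, I would establish connectedness of $\Gamma_A^s$ so that the decomposition produces a single tree class $\Delta$ and a single admissible group $G$. Since $A$ is connected and representation-finite, $\Gamma_A$ is connected by Auslander's theorem; since $A$ is non-simple, basic and selfinjective, no indecomposable projective is simple, so every projective vertex of $\Gamma_A$ has non-trivial radical and non-trivial socle factor linking it to the stable part. A careful tracing through almost split sequences whose middle terms contain projective-injective summands then shows that deleting the projective vertices leaves a connected quiver. Infinite cyclicity of $G$ then follows because every admissible group acting on $\bZ\Delta$ for a Dynkin $\Delta$ with finite orbit quiver is generated by a composition of a power of the translation with a graph automorphism.

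The main obstacle I expect is the careful handling of valuations in the non simply-laced setting over an arbitrary field $K$. Riedtmann's structure theorem and the Happel-Preiser-Ringel classification are usually cited for simply-laced quivers over algebraically closed fields, and extending them uniformly requires tracking valuations $(a,a')$ on arrows at every step; in particular, verifying that the length function $\dim_K$ is genuinely subadditive along \emph{valued} arrows (rather than naive arrows of the underlying graph) and that the resulting tree class $\Delta$ faithfully encodes these valuations is the delicate technical core. The connectedness step is also more subtle than it looks, since one must rule out that removing projective vertices disconnects the quiver via separately-linked projective-injective middle terms.
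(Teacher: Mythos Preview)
The paper does not supply its own proof of this theorem; it is quoted as a classical result of Riedtmann and Todorov, with a pointer to \cite[Theorem~IV.15.6]{SY9} for a full argument. Your outline is precisely the standard proof found in that reference: Riedtmann's structure theorem for connected stable valued translation quivers, combined with the Happel--Preiser--Ringel classification of valued graphs admitting a bounded positive subadditive function (built from $K$-dimensions of modules via almost split sequences), plus connectedness of $\Gamma_A$ for a connected representation-finite algebra. So there is nothing substantive to compare; your approach is the expected one, and the concerns you flag about valuations and connectedness are exactly the places where the textbook treatment spends its effort.
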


We note that if $A = \wh{B}/(\varphi)$ is an orbit algebra 
with $B$ a tilted algebra of Dynkin type $\Delta$
over a field $K$
and $\varphi$ is a strictly positive automorphism of $\wh{B}$
then $\Gamma_A^s \cong \bZ \Delta / G$, where $G$ is the
infinite cyclic group of automorphisms of $\bZ \Delta$
induced by $\varphi$.
It would be interesting to know when a selfinjective algebra $A'$
over an arbitrary field $K$
is socle equivalent to 
a
selfinjective orbit algebra
$A = \wh{B}/(\varphi)$ of Dynkin type.
It follows from the classification result of C. Riedtmann,
presented in Section~\ref{sec:2}, that all representation-finite
selfinjective algebras over an algebraically closed field have this property.
But for an arbitrary field $K$, this seems to be a hard problem.

We consider first selfinjective Nakayama algebras.
A module $X$ in a module category $\mod A$ is said to be
\emph{composition free} if all simple composition factors
of $X$ occur with multiplicity one.
The following result is a consequence of Theorem \ref{th:main}.

\begin{theorem}
\label{th:6.2}
Let $A$ be a non-simple, basic, connected, selfinjective Nakayama algebra
over a field $K$.
Then the following statements are equivalent:
\begin{enumerate}[\upshape(i)]
 \item
  Any indecomposable projective module $P$ in $\mod A$ 
  has composition free radical $\rad P$.
 \item
  Any indecomposable projective module $P$  in $\mod A$ 
  has composition free socle factor $P/\soc P$.
 \item
  $A$ is socle equivalent to an orbit algebra 
  $\wh{H}/(\varphi\nu_{\wh{H}})$, where
  $H$ is a hereditary Nakayama algebra
  and $\varphi$ is a positive automorphism of $\wh{H}$.
 \item
  $A$ is socle equivalent to an orbit algebra 
  $\wh{B}/(\varphi\nu_{\wh{B}})$, where
  $B$ is an algebra  and $\varphi$ is a positive automorphism
  of $\wh{B}$.		
\end{enumerate}
\end{theorem}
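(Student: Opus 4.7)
The implication (iii) $\Rightarrow$ (iv) is immediate since a hereditary Nakayama algebra is in particular an algebra, and (i) $\Leftrightarrow$ (ii) is a direct count. In a basic, connected, selfinjective Nakayama algebra all indecomposable projectives share a common Loewy length $r$ and their composition series trace a single cyclic sequence on the $n$ simples $S_1,\dots,S_n$: if $P = P_i$ has top $S_i$, then $\rad P$ has composition factors $S_{i+1},\dots,S_{i+r-1}$ and $P/\soc P$ has composition factors $S_i,\dots,S_{i+r-2}$ (indices mod $n$), so each of (i) and (ii) is equivalent to the single inequality $r - 1 \leq n$. The substantive work therefore lies in (i) $\Rightarrow$ (iii) and (iv) $\Rightarrow$ (i), both of which I would route through Theorem~\ref{th:main}.

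For (i) $\Rightarrow$ (iii), fix any indecomposable projective $P$ and consider the sectional path in $\Gamma_A^s$
\[
\Delta:\ \soc P = X_1 \to X_2 \to \cdots \to X_{r-1} = \rad P,
\]
where $X_k$ is the unique uniserial submodule of $\rad P$ of length $k$ and the arrows are the canonical irreducible monomorphisms. Every $X_k$ is non-projective (its length is strictly less than $r$), so $\Delta$ is a stable slice. Under (i) the tops of $X_1,\dots,X_{r-1}$ are pairwise distinct simples, and the uniserial structure then forces $\Hom_A(X_k,X_l)$ to be one-dimensional for $k\le l$ and zero otherwise, with composition of maps given by canonical factorisations. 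Hence $H(\Delta)=\End_A(M(\Delta))$ is the path algebra of the valued linear quiver $\Delta^{\op}$, a hereditary Nakayama algebra of Dynkin type $\bA_{r-1}$, so $\Delta$ is hereditary in the sense of Section~\ref{sec:4}. Moreover $\Delta$ is almost right regular: the only indecomposable projective with radical on $\Delta$ is $P$ itself (it is distinguished by the top $S_{i+1}$ of $\rad P$), and $\rad P = X_{r-1}$ is the unique sink of $\Delta$. Theorem~\ref{th:main} now produces a socle equivalence $A \sim \wh{B}/(\varphi\nu_{\wh{B}})$ with $B$ tilted, and the construction of $B$ in the proof of Theorem~\ref{th:main} allows us to take $B = H(\Delta)$, which is a hereditary Nakayama algebra $H$, yielding (iii).

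The hard part will be (iv) $\Rightarrow$ (i). From (iv), Theorem~\ref{thm:2.4}(iii) gives a stable equivalence between $A$ and $\Lambda := \wh{B}/(\varphi\nu_{\wh{B}})$, so $\Gamma_A^s \cong \Gamma_\Lambda^s$. The key step is to produce a hereditary almost right regular stable slice in $\Gamma_\Lambda^s$ from the orbit covering $\wh{B}\to\Lambda$, transport it across to $\Gamma_A^s$, and then invoke Theorem~\ref{th:main} to upgrade (iv) to the statement that $A$ is socle equivalent to $\wh{B'}/(\varphi'\nu_{\wh{B'}})$ for some tilted $B'$. The Nakayama structure of $A$ forces any such stable slice to be a linear sectional path of shape $\bA_s$ for some $s$, whence $B'=H$ is hereditary Nakayama of rank $s$, and then by reversing the endomorphism-algebra computation carried out in (i) $\Rightarrow$ (iii) (so that the tops of the modules on the slice are pairwise distinct simples of $A$) one concludes $s = r-1\le n$, which is (i). The main obstacle is constructing the hereditary almost right regular slice on $\Lambda$ without the a priori assumption that $B$ is tilted: I would extract it by examining the Kupisch series on $\Lambda$ controlled by the positive automorphism $\varphi\nu_{\wh{B}}$, using Theorems \ref{criterion 1} and \ref{thm:2.5} and Proposition~\ref{prop.def.ideal} to produce a deforming ideal of $A$ with acyclic factor, and invoking the explicit description of $\Gamma_{\wh{B}}^s$ arising from the repetitive category to locate the slice.
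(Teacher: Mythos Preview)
Your handling of (i) $\Leftrightarrow$ (ii), (iii) $\Rightarrow$ (iv), and (i) $\Rightarrow$ (iii) agrees with the paper: the same sectional path $\soc P = X_1 \to \cdots \to X_{r-1} = \rad P$ is taken as a hereditary almost right regular stable slice, and Theorem~\ref{th:main} is invoked. (A small imprecision: the algebra $B$ produced by Theorem~\ref{th:main} is $A/r_A(M(\Delta))$, a tilted algebra of type $H(\Delta)$, not $H(\Delta)$ itself; one reaches the hereditary $H$ via $\wh{B}\cong\wh{H}$. The paper glosses over this as well.)

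The genuine gap is in your (iv) $\Rightarrow$ (i). First, Theorem~\ref{thm:2.4}(iii) compares $A$ with $A[I]$ for a deforming ideal, not with an arbitrary socle-equivalent $\Lambda$; the identification $\Gamma_A^s\cong\Gamma_\Lambda^s$ you actually need follows from socle equivalence directly, so this is a miscitation rather than a fatal error. What is fatal is the ``main obstacle'' you name but do not resolve: Theorem~\ref{th:main} in the direction (iii) $\Rightarrow$ (i) requires $B$ tilted, which (iv) does not give, and building the slice on $\Lambda$ by the sectional-path construction requires $\rad P$ composition free in $\Lambda$, which is equivalent via socle equivalence to (i) for $A$---circular. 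The gestures toward Kupisch series and deforming ideals do not break this circle; in particular Proposition~\ref{prop.def.ideal} needs $Q_B$ acyclic, which is again part of what must be shown.

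The paper closes the cycle by proving (iv) $\Rightarrow$ (iii) and (iii) $\Rightarrow$ (ii) directly, bypassing stable slices altogether. For (iv) $\Rightarrow$ (iii): since $A$, hence $\Lambda$, is Nakayama and $B$ is a quotient of $\Lambda$ (positivity of $\varphi$), $B$ is Nakayama; a computation inside $\wh{B}$, comparing $e_{0,t}\wh{B}\cdot\rad\wh{B}$ with $e_{0,t}\wh{B}\cdot\bigoplus_m D(B_m)$ using uniseriality of the $e_{m,i}\wh{B}$, shows that every non-simple indecomposable projective $B$-module has projective socle, whence $B$ is hereditary. For (iii) $\Rightarrow$ (ii): for $m\neq r$ the socle factors $e_{m,i}\wh{H}/\soc$ and $e_{r,j}\wh{H}/\soc$ have disjoint composition factors, so every $P/\soc P$ in $\mod\Lambda$, and therefore in $\mod A$, is composition free.
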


\begin{proof}
The equivalence (i) $\Leftrightarrow$ (ii) and the implication (iii) $\Rightarrow$ (iv) are trivial. 

Assume that (i) holds. 
Let $P$ be an indecomposable projective module in $\mod A$.
Since $A$ is Nakayama, we have in $\Gamma_A$ the sectional path 
\[
	\Delta: \;\; \soc P = X_{1} \to X_2 \to \cdots \to X_{n-1} \to X_n = \rad P,
\]
given by the irreducible inclusion monomorphisms, which
is an almost right regular stable slice of $\Gamma_A$.
Moreover, the slice $\Delta$ is hereditary, because $\rad P$ 
is composition free.
Therefore, applying arguments from the proof of Theorem \ref{th:main},
we conclude that $A$ is socle equivalent to an orbit algebra 
$\wh{H}/(\varphi\nu_{\wh{H}})$ for 
a hereditary Nakayama algebra $H$ with the Gabriel quiver
$Q_H = \Delta = \Delta^{\op}$ and
a positive automorphism 
$\varphi$ of $\wh{H}$.
Hence  (iii) is satisfied.

Assume that (iii) holds. 
Let $H$ be a hereditary Nakayama algebra
and $\varphi$ is a positive automorphism of $\wh{H}$
such that the orbit algebra
$\Lambda = \wh{H}/(\varphi\nu_{\wh{H}})$
is socle equivalent to $A$.
Without loss of generality, we may assume that 
the quotient algebras 
$\Lambda/\soc(\Lambda)$ 
and $A/\soc(A)$ 
are equal.
Let $1_H=e_1+ \cdots + e_n$ be a decomposition 
of the identity of $H$ into a sum of pairwise orthogonal
primitive idempotents.
Then it follows from the definition of $\wh{H}$ 
that every indecomposable projective $\wh{H}$-module
$e_{m,i} \wh{H}$, $(m,i) \in \mathbb{Z} \times \{1, \dots, n\}$,
is uniserial.
Moreover, for any $(m,i)$ and $(r,j)$ in $\mathbb{Z} \times \{1, \dots, n\}$
with $m \neq r$,
the socle factors 
$e_{m,i}\wh{H} / \soc(e_{m,i}\wh{H})$
and
$e_{r,j}\wh{H} / \soc(e_{r,j}\wh{H})$
have disjoint simple composition factors.
Then it follows that any indecomposable
projective module $P$ in $\mod \Lambda$
has composition free socle factor $P/\soc P$.
Since the socle factors of indecomposable
projective modules in $\mod \Lambda$ and $\mod A$
are exactly the indecomposable projective modules
in $\mod \Lambda / \soc(\Lambda) = \mod A / \soc(A)$,
we conclude that $A$ satisfies (ii).

Assume now that (iv) holds, that is,
$A$ is socle equivalent to an orbit algebra
$\Lambda = \wh{B}/(\varphi\nu_{\wh{B}})$, where
$B$ is an algebra  and $\varphi$ is a positive automorphism
of $\wh{B}$.		
Clearly, such an algebra $B$ is basic and connected.
We claim that $B$ is a hereditary Nakayama algebra,
and hence $A$ satisfies (iii).
Since $A$ is a Nakayama algebra and
$A / \soc(A) \cong \Lambda / \soc(\Lambda)$,
we conclude that 
$\Lambda$ is a non-simple selfinjective Nakayama algebra.
We also note that $B$ is a quotient algebra of $\Lambda$,
because $\Lambda$ is an orbit algebra of $\wh{B}$
with respect to $(\varphi\nu_{\wh{B}})$ with $\varphi$ positive.
Therefore, we obtain that $B$ is a Nakayama algebra.
Hence, we have only to show that any non-simple projective 
module in $\mod B$ has projective socle. 
Let $1_B=e_1+ \cdots + e_n$ be a decomposition 
of the identity of $B$ into a sum of pairwise orthogonal
primitive idempotents.
Remember that $e_{m,i}$, $(m,i) \in \mathbb{Z} \times \{1, \dots, n\}$,
are pairwise orthogonal primitive idempotents of $\wh{B}$ and
$e_{m,i}\wh{B}= e_i B \oplus D(Be_i)$.
For convenience, we write $B_m$ for $1_m\wh{B}1_m$ where
$1_m=e_{m,1}+ \cdots + e_{m,n}$, and $\mathbb{D}(\wh{B})
= \bigoplus_{m\in \mathbb{Z}} D(B_m)$.
Note that $B_m$ is a copy of $B$, and observe that
\[
	\rad \wh{B}= \bigg(\bigoplus_{m\in \mathbb{Z}} \rad B_m\bigg)
	\oplus \mathbb{D}(\wh{B}).
\]
Since $\Lambda = \wh{B}/(\varphi\nu_{\wh{B}})$ is a Nakayama algebra, 
each $e_{m,i}\wh{B}$ and
so $e_iB$ is uniserial for all $m\in \mathbb{Z}$, $i\in \{1, \dots, n\}$.
Now let $P$ be a non-simple indecomposable projective module in $\mod B$,
and assume $P=e_1B$, without loss of generality. Let 
\[
    \soc P \cong \tp(e_tB)
\]
for some $t\in \{1, \dots, n\}$.
Take the $\wh{B}$-submodule
$
      M = \soc (e_1B) \oplus D(Be_1)
$
of $e_{0,1}\wh{B}= e_1B \oplus D(Be_1)$.
Then $ M(\rad \wh{B}) = D(Be_1)$,
because $M/D(Be_1)$ is a simple $\wh{B}$-module and $M$ is uniserial.
Let $f: e_{0,t}\wh{B} \to e_{0,1}\wh{B}$ be a  
$\wh{B}$-homomorphism with $f(e_{0,t}\wh{B})=M$.
It is shown that
\[
   M \cdot \mathbb{D}(\wh{B})=D(Be_1)
\]
taking into account 
the 
uniseriality of $e_{0,t}\wh{B}$.
On the other hand,
$f(e_{0,t}\wh{B}\cdot \rad \wh{B})= M(\rad \wh{B})$ and
$f(e_{0,t}\wh{B}\cdot \mathbb{D}(\wh{B}))
   = M \cdot \mathbb{D}(\wh{B})$.
Consequently, we have 
$f(e_{0,t}\wh{B}\cdot \rad \wh{B})
   =f(e_{0,t}\wh{B}\cdot \mathbb{D}(\wh{B}))$, 
which implies 
\[
   e_{0,t}\wh{B}\cdot \rad \wh{B}
   =e_{0,t}\wh{B}\cdot \mathbb{D}(\wh{B}) + \Ker f
   =e_{0,t}\wh{B}\cdot \mathbb{D}(\wh{B}) .
\]
Indeed, since 
$f(e_{0,t}\wh{B}\cdot \mathbb{D}(\wh{B}))
   =D(Be_1)\neq 0$ 
and $e_{0,t}\wh{B}$ is uniserial, $\Ker f$ is 
contained in $e_{0,t}\wh{B}\cdot \mathbb{D}(\wh{B})$.
As a result, we conclude that 
\[\tp (e_tB)=
   e_{0,t}\wh{B}/(e_{0,t}\wh{B}\cdot \rad \wh{B})
   = e_{0,t}\wh{B}/(e_{0,t}\wh{B}\cdot \mathbb{D}(\wh{B})) = e_tB,
\]
and hence $\soc P$ is projective in $\mod B$, as desired.
\end{proof}

We obtain also the following consequence of Theorems
 \ref{thm:3.7} and \ref{th:6.2}.

\begin{corollary}
\label{cor:6.3}
Let $A$ be a non-simple, basic, connected, selfinjective Nakayama algebra
over a field $K$.
Then the following statements are equivalent:
\begin{enumerate}[\upshape(i)]
 \item
  Any indecomposable projective module $P$ in $\mod A$ 
  is composition free.
 \item
  $A$ is isomorphic to an orbit algebra 
  $\wh{H}/(\varphi\nu_{\wh{H}})$, where
  $H$ is a hereditary Nakayama algebra
  and $\varphi$ is a strictly positive automorphism of $\wh{H}$.
 \item
  $A$ is isomorphic to an orbit algebra 
  $\wh{B}/(\varphi\nu_{\wh{B}})$, where
  $B$ is an algebra  and $\varphi$ is a strictly 
  positive automorphism
  of $\wh{B}$.		
\end{enumerate}
\end{corollary}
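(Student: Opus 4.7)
The implication $\mathrm{(ii)} \Rightarrow \mathrm{(iii)}$ is trivial, so the remaining work is to prove $\mathrm{(iii)} \Rightarrow \mathrm{(i)}$ and $\mathrm{(i)} \Rightarrow \mathrm{(ii)}$.

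For $\mathrm{(iii)} \Rightarrow \mathrm{(i)}$, I would argue as follows. Assuming $A \cong \wh{B}/(\varphi\nu_{\wh B})$ with $\varphi$ strictly positive, $A$ is in particular socle equivalent to such an orbit algebra, so Theorem~\ref{th:6.2} (the direction $\mathrm{(iv)} \Rightarrow \mathrm{(iii)}$) permits us to replace $B$ by a hereditary Nakayama algebra $H$ with, say, $n$ simple modules. The indecomposable projectives of $\wh H$ are then uniserial of Loewy length $n+1$, and the same is therefore true of $A$. On the other hand, the number $N$ of isomorphism classes of simple $A$-modules coincides with the number of $\langle\varphi\nu_{\wh H}\rangle$-orbits on the objects of $\wh H$, and strict positivity of $\varphi$ forces $\varphi\nu_{\wh H}$ to shift the canonical $\bZ$-grading of $\wh H$ by at least $2$. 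This yields $N \geq 2n \geq n+1$, so each indecomposable projective of $A$ has its $n+1$ composition factors among distinct simples, i.e.\ is composition free.

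For $\mathrm{(i)} \Rightarrow \mathrm{(ii)}$, Theorem~\ref{th:6.2} applied to (i) of the corollary (which implies (i) of Theorem~\ref{th:6.2}) supplies a socle equivalence between $A$ and some $\wh H/(\varphi\nu_{\wh H})$ with $H$ hereditary Nakayama and $\varphi$ positive. To upgrade the socle equivalence to an isomorphism with $\varphi$ strictly positive, I would invoke Theorem~\ref{thm:3.7}. Choosing an indecomposable projective $P$ of $A$, take the canonical sectional path
\[
\Delta : \ \soc P = X_1 \to X_2 \to \cdots \to X_n = \rad P
\]
(a hereditary, almost right regular stable slice in $\Gamma_A$), put $M = X_1 \oplus \cdots \oplus X_n$, and set $I = r_A(M)$; by the argument in the proof of Theorem~\ref{th:6.2}, $I$ is a deforming ideal of $A$ with residual identity $e = e_{i_1} + \cdots + e_{i_n}$, where the $e_{i_s}$ are the primitive idempotents corresponding to the $X_s$. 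The condition $e_i \neq e_{\nu(i)}$ in Theorem~\ref{thm:3.7} is immediate from composition-freeness (otherwise $\tp(e_iA) \cong \soc(e_iA)$ would repeat in the composition series of $e_iA$), and granted the more delicate condition $IeI = 0$, Theorem~\ref{thm:3.7} produces an isomorphism $A \cong \wh H/(\varphi\nu_{\wh H})$ with $\varphi$ positive. Strict positivity of $\varphi$ then follows by contradiction: a rigid $\varphi$ would force $A$ to be the trivial extension $H \ltimes D(H)$, whose indecomposable projectives have top isomorphic to socle and so always repeat a composition factor.

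The main obstacle I expect is the verification of $IeI = 0$ required to apply Theorem~\ref{thm:3.7}: the other hypothesis is essentially immediate from composition-freeness, and the shift-count used for $\mathrm{(iii)} \Rightarrow \mathrm{(i)}$ is purely combinatorial. This vanishing should reduce to analyzing two-fold compositions of radical morphisms of $A$ landing back on the slice $\Delta$, using the uniserial structure of $A$ and the bound $L \leq N$ on the Loewy length to rule out any such non-zero product.
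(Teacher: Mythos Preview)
Your overall plan agrees with the paper's: the corollary is stated there simply as a consequence of Theorems~\ref{thm:3.7} and~\ref{th:6.2}, and you have correctly located the two hypotheses $IeI=0$ and $e_i\neq e_{\nu(i)}$ of Theorem~\ref{thm:3.7} as the step that upgrades socle equivalence to an isomorphism in $(\mathrm{i})\Rightarrow(\mathrm{ii})$.

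There is, however, a genuine slip in your argument for $(\mathrm{iii})\Rightarrow(\mathrm{i})$. Strict positivity of $\varphi$ does \emph{not} force $\varphi\nu_{\wh H}$ to raise the $\bZ$-grading of $\wh H$ by at least $2$ on every object: it only says that $\varphi$ raises the level of \emph{some} object. Concretely, for $H$ the path algebra of $1\to 2\to\cdots\to n$, the automorphism $\sigma$ of $\wh H$ sending $e_{m,i}\mapsto e_{m,i+1}$ for $i<n$ and $e_{m,n}\mapsto e_{m+1,1}$ is strictly positive, yet $\sigma\nu_{\wh H}$ has exactly $n+1$ orbits, so $N=n+1<2n$ for $n\geq 2$. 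The bound you actually need, $N\geq n+1=L$, follows because any automorphism of $\wh H$ acts on its underlying $\mathbb{A}_\infty^\infty$ quiver as some power $\sigma^k$; one checks directly that $\sigma^k$ is positive iff $k\geq 0$ and rigid iff $k=0$, so strict positivity gives $k\geq 1$ and hence $N=k+n\geq n+1$. Note too that you must invoke the \emph{proof}, not just the statement, of Theorem~\ref{th:6.2}: it is the proof that shows the given $B$ is itself hereditary Nakayama, so that your original strictly positive $\varphi$ survives the passage to $H$.

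For $(\mathrm{i})\Rightarrow(\mathrm{ii})$, your outline is correct and your hint for $IeI=0$ is on target. With $N$ simples, Loewy length $L\leq N$, and $e=e_2+\cdots+e_L$, one computes $e_jI=\rad^{L-j+1}(e_jA)$ for each $j\in\{2,\dots,L\}$, so every element of $e_jI$ is (a combination of) paths of length at least $L-j+1$. A short case analysis---treating separately the summands $e_iI$ with $e_i$ inside and outside $e$---shows that any path contributing to $Ie_j$ composed with such an element has total length at least $L$ and hence vanishes. So the ``main obstacle'' you flagged is real but yields to exactly the length count you propose; it just needs to be written out.
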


Following \cite{RSS} a \emph{short cycle}
in a module category $\mod A$
is a sequence $X \to Y \to X$
of non-isomorphisms between two indecomposable modules
$X$ and $Y$ in $\mod A$.
It was shown in \cite[Corollary~2.2]{RSS}
that if an indecomposable module $M$
does not lie on a short cycle in $\mod A$
then $M$ is uniquely determined 
(up to isomorphism) by its composition factors,
that is, by its image $[M]$ in the Grothendieck 
group $K_0(A)$.
Moreover, it is known that if $\mod A$ has no short cycles, 
then $A$ is of finite representation type \cite{HL}.
We have also the following fact proved in
\cite[Lemma~3.2]{JPS}.

\begin{lemma} 
\label{JPS:lem}
Let $A$ be a selfinjective algebra which 
does not admit a short cycle in $\mod A$.
Then all stable slices in $\Gamma_A$ are
double $\tau_A$-rigid.
\end{lemma}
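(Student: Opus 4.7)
The plan is to argue by contrapositive: let $\Delta$ be a stable slice of $\Gamma_A$ and suppose $X, Y \in \Delta$ are indecomposable with $\Hom_A(X, \tau_A Y) \neq 0$; I aim to produce a short cycle in $\mod A$. By passing to $A^{\op}$ (under which $\tau_A$ and $\tau_A^{-1}$ swap, and stable slices correspond), the same argument will also handle the companion vanishing $\Hom_A(\tau_A^{-1} X, Y) = 0$, so it suffices to treat one direction.

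The key tool is the Auslander--Reiten formula. Since $A$ is selfinjective, projective and injective modules coincide and
\[
\Ext^1_A(Y, X) \;\cong\; D \,\underline{\Hom}_A(X, \tau_A Y).
\]
Because $X$ and $\tau_A Y$ are indecomposable non-projective modules (they lie in $\Gamma_A^s$), a nonzero morphism $f: X \to \tau_A Y$ survives modulo projective factorizations; hence $\Ext^1_A(Y, X) \neq 0$ and there exists a non-split short exact sequence
\[
\eta:\quad 0 \to X \to Z \to Y \to 0.
\]
Decomposing $Z = \bigoplus Z_i$ into indecomposables, non-splitness of $\eta$ forces that every nonzero component $X \to Z_i$ (respectively $Z_i \to Y$) be a non-isomorphism, otherwise $X$ (resp.\ $Y$) would be a direct summand of $Z$, contradicting non-splitness.

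The core step, which I expect to be the main obstacle, is assembling this data into a genuine short cycle between two indecomposables. The natural route uses the universal properties of the almost split sequences: the AR sequence $0 \to \tau_A Y \to E \to Y \to 0$ ending at $Y$ yields, from $\eta$, an induced left vertical morphism $X \to \tau_A Y$ which is nonzero precisely because $\eta$ is non-split; dually, the AR sequence $0 \to X \to E' \to \tau_A^{-1} X \to 0$ starting at $X$ yields an induced $\tau_A^{-1}X \to Y$, again nonzero. Conditions (2) and (3) in the definition of a stable slice then constrain the summands $Z_i$ to lie in $\Delta$, $\tau_A\Delta$, or $\tau_A^{-1}\Delta$, and combining these location constraints with the above non-isomorphisms should pin down two indecomposables $U, V$ admitting non-isomorphisms $U \to V$ and $V \to U$, giving the required short cycle. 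The degenerate cases $X \cong Y$ or $X \cong \tau_A Y$ admit a quicker treatment: each forces a nonzero non-invertible element of $\End_A$ of some indecomposable, immediately supplying a self short cycle $U \to U \to U$ and contradicting the hypothesis.
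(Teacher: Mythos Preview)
The paper does not give its own proof of this lemma; it merely records it as a fact and cites \cite[Lemma~3.2]{JPS}. So there is no in-paper argument to compare against, and your attempt must stand on its own. Unfortunately it does not: there are two genuine gaps.

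First, the step ``because $X$ and $\tau_A Y$ are indecomposable non-projective, a nonzero morphism $f: X \to \tau_A Y$ survives modulo projective factorizations'' is unfounded. Non-projectivity of source and target in no way prevents a map from factoring through a projective-injective module, and in selfinjective algebras such factorizations are ubiquitous. Without $\underline{\Hom}_A(X,\tau_A Y)\neq 0$ the Auslander--Reiten formula does not give $\Ext^1_A(Y,X)\neq 0$, so your non-split sequence $\eta$ may not exist. Since the statement to be proved concerns $\Hom$, not $\underline{\Hom}$, this case must be handled.

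Second, and more seriously, you write that ``conditions (2) and (3) in the definition of a stable slice then constrain the summands $Z_i$ to lie in $\Delta$, $\tau_A\Delta$, or $\tau_A^{-1}\Delta$.'' This is a misreading of the definition. Conditions (2) and (3) speak only of \emph{arrows in $\Gamma_A$}, i.e.\ irreducible maps; they say nothing about the components $X \to Z_i$ and $Z_i \to Y$ of an arbitrary non-split extension, which need not be irreducible. So there is no reason for the $Z_i$ to sit anywhere near $\Delta$, and the mechanism you propose for ``pinning down'' a short cycle collapses. The argument then trails off (``should pin down two indecomposables $U,V$\ldots'') without actually producing the cycle; the degenerate cases are handled equally loosely, since $\Hom_A(X,\tau_A X)\neq 0$ does not by itself produce a non-invertible endomorphism of any module.

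A workable route stays closer to the Auslander--Reiten quiver: feed the nonzero $f:X\to\tau_A Y$ into the monomorphism $\tau_A Y \hookrightarrow E$ from the almost split sequence ending at $Y$, and analyse the resulting nonzero composites $X\to E_j$ together with the irreducible maps $E_j\to Y$, using the slice conditions \emph{on these irreducible arrows} and the connectedness of $\Delta$. That is where the slice axioms actually bite.
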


We also recall the following fact proved in 
\cite[Theorem 3.1]{JPS}.

\begin{theorem} 
\label{JPS:non-Nakayama}
Let $A$ be a non-simple, connected, selfinjective algebra
of finite representation type over a field $K$.
The following statements are equivalent:
\begin{enumerate}[\upshape(i)]
 \item
  $\Gamma_A$ admits a semi-regular stable slice.
 \item
  $A$ is not a Nakayama algebra.
\end{enumerate}
\end{theorem}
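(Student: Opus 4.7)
The plan is to prove the two implications separately, with (ii) $\Rightarrow$ (i) being the main technical content.

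For (i) $\Rightarrow$ (ii) I would argue by contrapositive. Assume $A$ is non-simple, connected, selfinjective and Nakayama, aiming to show $\Gamma_A$ admits no semi-regular stable slice. All indecomposable projectives share a common Loewy length $c \geq 2$, and a direct inspection of tops and socles of uniserial modules shows that, up to isomorphism, the sets $\{\rad P \mid P \in \ind\cP(A)\}$ and $\{Q/\soc Q \mid Q \in \ind\cP(A)\}$ coincide: both consist precisely of the uniserial modules of length $c-1$. By the classical Riedtmann--Todorov description, $\Gamma_A^s \cong \bZ\bA_{c-1}/G$ for an admissible infinite cyclic group $G$, and the closure conditions (2)--(3) in the definition of stable slice force any stable slice $\Delta$ of $\Gamma_A$ to be a section of this translation quiver; in particular $\Delta$ must meet every $\tau_A$-orbit, and so in particular the orbit of length-$(c-1)$ modules. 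By the coincidence observed above, any such module is simultaneously $\rad P$ for some indecomposable projective $P$ and $Q/\soc Q$ for some indecomposable projective $Q$, so $\Delta$ is neither right regular nor left regular.

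For (ii) $\Rightarrow$ (i), assume $A$ is not a Nakayama algebra. I would first justify the existence of an indecomposable projective $P$ in $\mod A$ such that $P/\soc P$ is not isomorphic to $\rad Q$ for any indecomposable projective $Q$ (otherwise the matching of tops and socles would propagate uniseriality throughout $\mod A$ and force $A$ to be Nakayama). Following the construction sketched in the proof of Theorem~\ref{th:main}, case~(2)(b), I would then define the full valued subquiver $\Delta_P$ of $\Gamma_A$ whose vertices are $\tau_A^{-1}(P/\soc P)$ together with all indecomposable $X$ admitting a nontrivial sectional path in $\Gamma_A^s$ from $P/\soc P$ to $X$, and set $\Delta := \tau_A(\Delta_P)$. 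I would then verify the three stable-slice axioms for $\Delta$: connectedness and acyclicity follow from the sectional-path structure, absence of projectives from the $\tau_A$-shift combined with the role of $P/\soc P$ as a source of $\Delta_P$, and the closure axioms from the fact that $\tau_A$ is a quiver automorphism of $\Gamma_A^s$. It then remains to show $\Delta$ is right regular, whence semi-regular.

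The main obstacle is the right-regularity verification in the second step: one must show that no $\rad Q$ lies on $\Delta = \tau_A(\Delta_P)$, equivalently that no $\tau_A^{-1}(\rad Q)$ lies on $\Delta_P$. The idea is that $P/\soc P$ was chosen precisely to avoid the ``$\rad$-of-projective'' positions, and a careful local analysis of sectional paths emanating from $P/\soc P$ in $\Gamma_A^s$ --- in particular how such paths are blocked at indecomposable projectives via the almost-split sequences ending or starting at them --- shows the property propagates to every vertex of $\Delta_P$. This detailed sectional-path bookkeeping is the heart of the argument.
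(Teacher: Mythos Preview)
The paper does not actually supply a proof of this theorem: it is quoted verbatim from \cite[Theorem~3.1]{JPS} as a recalled fact. There is therefore no ``paper's own proof'' to compare against directly. That said, the construction you propose for (ii)~$\Rightarrow$~(i) --- choosing an indecomposable projective $P$ with $P/\soc P$ not the radical of any projective, forming the subquiver $\Delta_P$ generated by $\tau_A^{-1}(P/\soc P)$ and the targets of nontrivial sectional paths out of $P/\soc P$, and taking $\Delta=\tau_A(\Delta_P)$ --- is exactly the construction the paper itself sketches in the proof of Theorem~\ref{th:main}, case~(2)(b), where it is attributed to \cite[Theorem~3.1]{JPS} and \cite[Proposition~4.4]{SY12}. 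So your plan for the substantive direction coincides with the approach of the cited source as reported in this survey; you have also correctly identified the right-regularity verification as the nontrivial core.

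Your contrapositive argument for (i)~$\Rightarrow$~(ii) is sound: for a connected selfinjective Nakayama algebra the indecomposable projectives do share a common Loewy length $c\geq 2$, the length-$(c-1)$ uniserials are simultaneously radicals and socle factors of projectives, and conditions (2)--(3) in the definition of stable slice force any stable slice in $\Gamma_A^s\cong\bZ\bA_{c-1}/G$ to meet the $\tau_A$-orbit of length-$(c-1)$ modules. This direction is elementary and the paper does not comment on it.
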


The following theorem
was proved by A.~Jaworska-Pastuszak and A.~Skowro\'nski
in \cite{JPS}.



\begin{theorem}
Let $A$ be a non-simple, basic, connected, selfinjective algebra
over a field $K$.
The following statements are equivalent:
\begin{enumerate}[\upshape(i)]
 \item 
  $\mod A$ has no short cycles.
 \item
  $A$ is isomorphic to an orbit algebra 
  $\wh{B}/(\psi\nu_{\wh{B}}^2)$, where
  $B$ is a tilted algebra of Dynkin type and $\psi$ is a strictly
  positive automorphism of $\wh{B}$.
\end{enumerate}
\end{theorem}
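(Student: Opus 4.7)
The plan is to prove both implications, using the characterization theorems \ref{double rigid}, \ref{JPS:non-Nakayama} and \ref{th:6.2}, together with Lemma \ref{JPS:lem} and the classical fact of Happel--Liu \cite{HL} that absence of short cycles in $\mod A$ forces $A$ to be of finite representation type.

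For (ii) $\Rightarrow$ (i), the strategy is to lift a hypothetical short cycle to the Galois cover. Assume $A \cong \wh{B}/(\psi\nu_{\wh{B}}^2)$ with $B$ tilted of Dynkin type and $\psi$ strictly positive. By the Hughes--Waschb\"usch description \cite{HW}, $\Gamma_{\wh{B}}$ has the shape of a $\bZ\Delta$-component with indecomposable projective-injectives inserted (where $\Delta$ is the underlying Dynkin graph), and in particular $\mod \wh{B}$ admits no short cycles. The orbit generator $\psi\nu_{\wh{B}}^2$ shifts every object of $\wh{B}$ by at least $2$. Any short cycle $X \to Y \to X$ in $\mod A$ would lift via the push-down functor to a configuration $X' \to Y' \to g\cdot X'$ in $\mod \wh{B}$ for some nontrivial $g$ in the orbit group; the shift-by-$\geq 2$ property places $X'$ and $g\cdot X'$ sufficiently far apart in the $\bZ\Delta$-geometry that the corresponding Hom-spaces must vanish, giving the contradiction.

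For (i) $\Rightarrow$ (ii), assume $\mod A$ has no short cycles, so $A$ is representation-finite by \cite{HL}. Suppose first that $A$ is not Nakayama. By Theorem \ref{JPS:non-Nakayama}, $\Gamma_A$ admits a semi-regular stable slice, which by Lemma \ref{JPS:lem} is automatically double $\tau_A$-rigid. Theorem \ref{double rigid} then produces either an isomorphism $A \cong \wh{B}/(\varphi\nu_{\wh{B}})$ with $B = \End_H(T)$ tilted and $\varphi$ strictly positive, or a proper socle equivalence to such an orbit algebra with $\varphi$ rigid. The rigid alternative is to be excluded by exhibiting an explicit short cycle produced by identifying $\tau$-translates too close together in $\wh{B}$ (a rigid $\varphi$ makes $\varphi\nu_{\wh{B}}$ a pure shift by $1$, and the standard maps $\rad P \hookrightarrow P$ and $P \twoheadrightarrow P/\soc P$ in the orbit assemble into a short cycle); a nontrivial socle deformation is ruled out analogously by transporting short cycles along the stable equivalence of Theorem \ref{thm:2.4}(iii). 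Since $A$ is representation-finite, $B$ is necessarily of Dynkin type. Setting $\psi := \varphi\nu_{\wh{B}}^{-1}$ gives $\varphi\nu_{\wh{B}} = \psi\nu_{\wh{B}}^2$, and the remaining point is to verify that $\psi$ is strictly positive.

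The Nakayama case is handled by direct combinatorics in conjunction with Corollary \ref{cor:6.3}: a selfinjective Nakayama algebra fails to admit a short cycle exactly when the composition-freeness conditions of Theorem \ref{th:6.2} hold and, moreover, the projectives are ``spread out enough'' in the orbit presentation, which translates into the orbit generator having the form $\psi\nu_{\wh{H}}^2$ with $\psi$ strictly positive and $H$ a hereditary Nakayama algebra of type $\bA_m$. The main obstacle, common to both cases, is the precise calibration between the homological hypothesis and the orbit-theoretic shift condition: one must prove that ``no short cycles in $\mod A$'' is equivalent to requiring that the defining automorphism in the orbit presentation shifts indices of $\wh{B}$ by at least $2$ throughout (rather than the minimal shift by $1$ coming from $\nu_{\wh{B}}$ alone), and this is the delicate step that genuinely uses both the global $\bZ\Delta$-geometry of $\Gamma_{\wh{B}}$ and the explicit Hom-vanishing pattern in that geometry to trace short cycles back through the covering functor.
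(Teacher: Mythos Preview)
Your overall architecture matches the paper's: use \cite{HL} for finite type, split into the Nakayama and non-Nakayama cases, and in the latter combine Theorem~\ref{JPS:non-Nakayama}, Lemma~\ref{JPS:lem}, and Theorem~\ref{double rigid}. The (ii)~$\Rightarrow$~(i) direction via the push-down functor is likewise what the paper does (this is exactly the content of \cite[Theorem~3.7]{JPS}).

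The organizational difference is this. The paper does \emph{not} try to extract the form $\psi\nu_{\wh{B}}^2$ directly from Theorem~\ref{double rigid}. Instead it argues in two steps: first show only that $A$ is isomorphic to \emph{some} orbit algebra $\wh{B}/G$ with $B$ tilted of Dynkin type; then invoke \cite[Theorem~3.7]{JPS}, which already establishes the full equivalence (i)~$\Leftrightarrow$~(ii) for algebras that are known to be orbit algebras. All of the ``shift-by-$\geq 2$'' calibration, and hence the strict positivity of $\psi$, is packaged inside that reference.

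Your attempt to do this calibration by hand has two concrete gaps. First, from Theorem~\ref{double rigid}(a) you only get $\varphi$ strictly positive, and then $\psi:=\varphi\nu_{\wh{B}}^{-1}$ need not even be positive: if $\varphi$ fixes the level of some $e_{m,i}$ (which ``strictly positive'' allows), then $\psi$ sends $e_{m,i}$ to level $m-1$. So the sentence ``the remaining point is to verify that $\psi$ is strictly positive'' hides the entire content of \cite[Theorem~3.7]{JPS}; you cannot get it from the positivity of $\varphi$ alone, and the no-short-cycle hypothesis must be used again here in an essential way. Second, your exclusion of alternative~(b) in Theorem~\ref{double rigid} relies on ``transporting short cycles along the stable equivalence of Theorem~\ref{thm:2.4}(iii)'', but short cycles are defined via ordinary Hom, not stable Hom, and there is no general principle that stable equivalences preserve them. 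The paper avoids this issue entirely by the two-step reduction above.
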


It was shown in \cite[Theorem~3.7]{JPS} that the equivalence of
(i) and (ii) holds if $A = \wh{B}/G$ for a tilted algebra $B$
of Dynkin type and an admissible group $G$ of automorphisms
of $\wh{B}$, applying the fact that the push-down functor
$F_{\lambda} : \mod \wh{B} \to \mod A$,
induced by the canonical Galois covering functor
$F : \wh{B} \to \wh{B}/G = A$, 
is a Galois covering of module categories.
We explain now why (i) implies (ii).
Therefore, assume that $\mod A$ has no short cycles.
By the above comments, it is enough to show that
$A$ is isomorphic to an orbit algebra 
$\wh{B}/(\varphi \nu_{\wh{B}})$
for a tilted algebra $B$ of Dynkin type and a strictly
positive automorphism $\varphi$ of $\wh{B}$.
If $A$ is not a Nakayama algebra, this follows
from 
Theorems \ref{double rigid} and \ref{JPS:non-Nakayama}, 
and Lemma~\ref{JPS:lem}.
Assume $A$ is a Nakayama algebra.
Since every indecomposable projective module $P$ in $\mod A$
does not lie on a short cycle,
we easily conclude that $P$ is composition free,
and then $A$ has a required form
$\wh{B}/(\varphi \nu_{\wh{B}})$
by Theorem~\ref{th:6.2}. 

In \cite{BS1} M. B\l aszkiewicz \and A. Skowro\'nski 
investigated the structure of selfinjective algebras
of finite representation type
whose module category admits maximal almost split sequences.
For an algebra $A$ and an almost split sequence
\[
  0  \longrightarrow \tau_A X \longrightarrow 
  Y \longrightarrow X \longrightarrow 0
\]
in $\mod A$, we may consider the numerical invariant 
$\alpha(X)$ of $X$ being the number of summands
in a decomposition 
$Y = Y_1 \oplus \dots \oplus Y_r$ of $Y$
into a direct sum of indecomposable modules.
Then $\alpha(X)$ measures the complexity of homomorphisms
in $\mod A$ with domain $\tau_A X$ and codomain $X$.
It has been proved by R.~Bautista and S.~Brenner in \cite{BB}
(see also \cite{L} for an alternative proof)
that if $A$ is of finite
representation type and $X$ is an indecomposable
nonprojective module in $\mod A$, then $\alpha(X) \leq 4$,
and if $\alpha(X) = 4$, then the middle term $Y$
of an almost split sequence in $\mod A$ with the right term $X$
admits an indecomposable projective-injective direct summand.
Recall that (by general theory) if $P$ is an indecomposable
projective-injective module in $\mod A$, then there is in $\mod A$
 an almost split sequence of the form
\[
  0  \longrightarrow \rad P \longrightarrow 
  (\rad P / \soc P) \oplus P \longrightarrow P/\soc P \longrightarrow 0 .
\]
If $A$ is of finite representation type, then such an almost
split sequence with $\alpha(P/\soc P) = 4$
is called a \emph{maximal almost split sequence} in $\mod A$.

Let $A$ be a selfinjective algebra of finite representation type,
and assume that $\mod A$ admits an indecomposable
projective-injective module $P$ with $\alpha(P/\soc P) = 4$.
We denote by $\Delta_P$ the full valued subquiver of $\Gamma_A$
given by the module $\tau_A^{-1}(P/\soc P)$
and all indecomposable modules in $\mod A$ such that
there is a non-trivial sectional path in $\Gamma_A$ from
$P/\soc P$ to $X$.
It was shown in \cite[Theorem~5.2]{BS1}
that $\Delta_P$ is a Dynkin quiver such that
$\Gamma_A^s = \bZ \Delta_P/G$ for an admissible
automorphism group $G$ of the translation quiver
$\bZ \Delta_P$.
We denote by $M_P$ the direct sum of all modules lying
on $\Delta_P$.
Then the main theorem in \cite[Theorems 1 and 2]{BS1} can be formulated as follows.
 
\begin{theorem}
Let $A$ be a basic, connected, selfinjective algebra over a field $K$. 
Then the following statements are equivalent:
\begin{enumerate} [\upshape(i)]
 \item
  $A$ is of finite representation type and admits an indecomposable
  projective module $P$ with
  $\alpha(P/ \soc P)=4$
  and
  $\Hom_A(M_P,\tau_A M_P) = 0$.
 \item
  $A$ is socle equivalent to an orbit algebra $\wh{B}/(\varphi\nu_{\wh{B}}^m)$
  for some positive integer $m$, 
  a rigid automorphism $\varphi$ of $\wh{B}$,
  and a tilted algebra $B$ of Dynkin type having an
  indecomposable projective module $Q$ whose top
  is injective and the radical is a direct sum of three
  indecomposable projective modules.
\end{enumerate}
\end{theorem}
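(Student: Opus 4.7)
My plan is to split the equivalence into two directions and to use Theorem~\ref{th:main} as the main engine, together with the combinatorial structure result for $\Gamma_A^s$ of algebras with a maximal almost split sequence from \cite{BS1} quoted above.

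The direction $(ii)\Rightarrow(i)$ is a direct verification. Since $B$ is tilted of Dynkin type, any orbit algebra $\Lambda=\wh{B}/(\varphi\nu_{\wh{B}}^m)$ is of finite representation type, and socle equivalence preserves this property together with stable equivalence by Theorem~\ref{thm:2.4}. The hypothesised indecomposable projective $Q\in\mod B$ lifts to an indecomposable projective-injective of $\wh{B}$ and, because $\varphi$ is rigid, descends to an indecomposable projective-injective $P$ of $\Lambda$ whose radical inherits the prescribed decomposition as a direct sum of three indecomposable projective modules. This forces $\rad P/\soc P$ to split into three indecomposable summands and hence $\alpha(P/\soc P)=4$. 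The vanishing $\Hom_A(M_P,\tau_A M_P)=0$ is inherited from the analogous vanishing in $\mod\wh{B}$, which holds because $M_P$ lifts to a slice in the repetitive category of a Dynkin-type tilted algebra.

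For $(i)\Rightarrow(ii)$ I would first invoke \cite{BS1} to conclude that $\Delta_P$ is a Dynkin quiver and $\Gamma_A^s\cong\bZ\Delta_P/G$ for an admissible group $G$. Following case~(2)(b) of the proof sketch of Theorem~\ref{th:main}, I take $\Delta=\tau_A\Delta_P$; this is a connected, acyclic, almost right regular stable slice of $\Gamma_A$, with $\rad P$ sitting as the distinguished sink of $\Delta$ by the structure of the almost split sequence $0\to\rad P\to(\rad P/\soc P)\oplus P\to P/\soc P\to 0$. The assumption $\Hom_A(M_P,\tau_A M_P)=0$ is used precisely to upgrade $\Delta$ to a \emph{hereditary} stable slice: via the Auslander--Reiten formula together with the acyclicity of $\Delta$, this vanishing forces $\End_A(M(\Delta))$ to be hereditary with Gabriel quiver $\Delta^{\op}$. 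Theorem~\ref{th:main} now delivers a socle equivalence $A\sim\wh{B_0}/(\varphi_0\nu_{\wh{B_0}})$ with $B_0=A/\ann_A(M(\Delta))$ tilted of Dynkin type and $\varphi_0$ a positive automorphism of $\wh{B_0}$.

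It remains to identify, inside a suitable tilted algebra $B$ derived-equivalent to $B_0$, the required projective module $Q$ and to refine the orbit generator to the form $\varphi\nu_{\wh{B}}^m$ with $\varphi$ rigid. The identification of $Q$ is straightforward: in the Gabriel quiver $\Delta^{\op}$ of $B_0$, the vertex attached to $P/\soc P$ receives three arrows, one from each indecomposable summand of $\rad P/\soc P$, so the corresponding $B_0$-projective $Q$ has $\rad Q$ splitting into three indecomposable projectives, and its top is the simple injective $B_0$-module at that source vertex. I expect the main obstacle to be the refinement of the orbit generator: this amounts to showing that, for an appropriate choice of fundamental domain $B\subset\wh{B_0}$, the admissible group $G$ shifts every object of $\wh{B}$ by the same integer $m$. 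I plan to deduce this uniform-shift property from the rigidity of the $G$-action near the distinguished projective-injective vertex of $\wh{B}$ supporting $P$: the existence of a module with $\alpha$-value $4$ on the slice, together with the Dynkin combinatorics of $\Delta_P$, should pin down the local shape near $P$ up to simultaneous translation, forcing the generator of $G$ to be a common shift composed with a rigid permutation of the objects.
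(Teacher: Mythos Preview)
The paper does not give its own proof of this statement: it is quoted verbatim as the main result of \cite[Theorems~1 and~2]{BS1}, with no argument supplied here. So there is no ``paper's proof'' to compare your proposal against, and what follows is an assessment of your outline on its own merits.

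Your plan for $(ii)\Rightarrow(i)$ is plausible, though the passage from ``$Q$ has three arrows in the quiver'' to ``$\rad P/\soc P$ has three indecomposable summands in $\mod\Lambda$'' deserves an honest argument rather than a one-line claim; the special hypotheses on $Q$ (that $\rad Q$ is a direct sum of three \emph{projective} modules and that $\tp Q$ is injective) are doing real work here and you should say exactly where.

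For $(i)\Rightarrow(ii)$ there are two genuine gaps. First, your identification of the projective $Q$ in $B_0$ is too quick: the fact that the Gabriel quiver of $B_0$ has three arrows at the vertex corresponding to $P/\soc P$ only tells you that $\rad Q/\rad^2 Q$ has three simple summands, not that $\rad Q$ itself decomposes as a direct sum of three indecomposable \emph{projectives}. The latter is a much stronger structural condition on $B_0$ and does not follow from the slice combinatorics alone. Second, and more seriously, Theorem~\ref{th:main} produces a generator of the form $\varphi_0\nu_{\wh{B_0}}$ with $\varphi_0$ merely \emph{positive}, whereas the target form is $\varphi\nu_{\wh{B}}^m$ with $\varphi$ \emph{rigid}. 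Not every positive automorphism of $\wh{B_0}$ factors as a rigid automorphism times a power of $\nu$, even after changing $B_0$ within its derived class, so this ``refinement'' is not a formality. Your last paragraph acknowledges this but offers only a heuristic (``the Dynkin combinatorics\ldots should pin down the local shape''); in \cite{BS1} this step is the substantive part of the argument and requires a detailed analysis of how the maximal almost split sequence sits inside $\Gamma_{\wh{B}}$. As written, your proposal does not contain the idea that closes this gap.
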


We note that, if $K$ is algebraically closed, 
then the 
condition
$\Hom_A(M_P,\tau_A M_P) = 0$ in (i) 
is superfluous, and we may replace in (ii) 
``socle equivalent'' by ``isomorphic''. 
We refer to \cite[Section~3]{BS1} for a complete
description of the tilted algebras $B$ of Dynkin type
occurring in (ii).

\section{Stable equivalences of selfinjective orbit algebras}
\label{sec:7}

We end this article with the following combination of 
\cite[Theorem~1]{SY2} and \cite[Theorem~1]{SY7},
and its consequence.

\begin{theorem}
Let $A$ be a non-simple, basic, connected, selfinjective algebra 
over a field $K$. 
Then the following statements are equivalent:
\begin{enumerate} [\upshape(i)]
 \item
  $A$ is stably equivalent to an orbit algebra $\wh{B}/(\varphi\nu_{\wh{B}})$,
  where $B$ is a tilted algebra and
  $\varphi$ is a positive automorphism of $\wh{B}$.
 \item
  $A$ is socle equivalent to an orbit algebra $\wh{B}/(\varphi\nu_{\wh{B}})$,
  where $B$ is a tilted algebra and
  $\varphi$ is a positive automorphism of $\wh{B}$.
\end{enumerate}
Moreover, if $K$ is an algebraically closed field, 
we may replace in {\upshape{(ii)}} ``socle equivalent'' by ``isomorphic''. 
\end{theorem}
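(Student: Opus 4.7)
The plan is to reduce both implications to Theorem~\ref{th:main}, which equates condition~(ii) with the existence in $\Gamma_A$ of a hereditary almost right regular stable slice. Once this reduction is made, the theorem becomes a statement about whether such slices can be transferred across a stable equivalence of selfinjective algebras.

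For (ii) $\Rightarrow$ (i), I would apply Theorem~\ref{th:main} to produce a hereditary almost right regular stable slice $\Delta$ in $\Gamma_A$. Taking $M = M(\Delta)$, $H = \End_A(M) = H(\Delta)$, $I = r_A(M)$, and $e$ a residual identity of $B' := A/I$, the discussion in the proof of Theorem~\ref{th:main} shows that $B'$ is a tilted algebra, $I$ is a deforming ideal of $A$, and $r_A(I) = eI$. Theorem~\ref{thm:2.4}(iii) then yields that $A$ and $A[I]$ are stably equivalent, while Theorem~\ref{thm:2.5}(i) identifies $A[I]$ with an orbit algebra $\wh{B'}/(\varphi'\nu_{\wh{B'}})$ for some positive automorphism $\varphi'$. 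This chain establishes~(i).

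For (i) $\Rightarrow$ (ii), I would start from a stable equivalence $\umod A \simeq \umod \Lambda$ with $\Lambda = \wh{B}/(\varphi\nu_{\wh{B}})$ and use Theorem~\ref{th:main} on $\Lambda$ to obtain a hereditary almost right regular stable slice $\Delta_\Lambda$ in $\Gamma_\Lambda$. Since a stable equivalence between selfinjective algebras induces an isomorphism of valued translation quivers $\Gamma_A^s \cong \Gamma_\Lambda^s$, and $\Delta_\Lambda$ sits entirely inside $\Gamma_\Lambda^s$ (containing no projectives), it transfers to a full subquiver $\Delta_A$ in $\Gamma_A^s \subseteq \Gamma_A$ of the same translation-quiver shape. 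The defining conditions (1)--(3) of a stable slice depend only on this shape, so $\Delta_A$ is a stable slice of $\Gamma_A$. It then suffices, by Theorem~\ref{th:main}, to verify that $\Delta_A$ is hereditary and almost right regular.

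The main obstacle is precisely this final verification. The hereditary condition requires $\End_A(M(\Delta_A))$ to be isomorphic to $\End_\Lambda(M(\Delta_\Lambda)) = H$, but a stable equivalence a priori only preserves the quotient $\underline{\End}$, not $\End$ itself: the two may differ by morphisms that factor through projective modules. One must show that the acyclicity of $\Delta_A^{\op}$, together with a careful analysis of how the indecomposable projectives of $A$ attach to $\Gamma_A^s$, forces any such factorizations to contribute nothing beyond the morphisms already present on the slice. The almost right regular condition similarly requires control of the position of the radicals $\rad P$ of indecomposable projectives $P \in \ind \cP(A)$ relative to $\Delta_A$, which should follow from the fact that stable equivalences between selfinjective algebras respect Nakayama permutations and the combinatorial identification of modules of the form $\rad P$ inside the stable quiver. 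With these two properties established for $\Delta_A$, Theorem~\ref{th:main} applied to $A$ yields~(ii), completing the equivalence.
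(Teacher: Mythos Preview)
The paper does not actually prove this theorem: it simply records it as the combination of \cite[Theorem~1]{SY2} and \cite[Theorem~1]{SY7}, which treat the non-Dynkin and Dynkin cases separately using the earlier characterizations (generalized standard translation subquivers in the infinite type case, and a direct analysis in the Dynkin case). Your attempt to give a unified argument via Theorem~\ref{th:main} is therefore a genuinely different route.

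Your argument for (ii) $\Rightarrow$ (i) is correct: Theorem~\ref{th:main} produces the slice $\Delta$ in $\Gamma_A$, the proof of that theorem shows $I = r_A(M(\Delta))$ is a deforming ideal with $r_A(I) = eI$, and then Theorems~\ref{thm:2.4}(iii) and~\ref{thm:2.5}(i) give that $A$ is stably equivalent to $A[I] \cong \wh{B'}/(\varphi'\nu_{\wh{B'}})$.

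Your argument for (i) $\Rightarrow$ (ii), however, has a real gap at exactly the point you flag but do not resolve. A stable equivalence $\umod A \simeq \umod \Lambda$ does transport the \emph{shape} of a stable slice, but the two extra properties you need are not stable invariants in any obvious way. The hereditary condition asks that $\End_A(M(\Delta_A))$ be hereditary with valued quiver $\Delta_A^{\op}$; a stable equivalence only controls $\underline{\End}$, and the morphisms factoring through projectives are governed by how the indecomposable projectives of $A$ attach to $\Gamma_A^s$, which is \emph{not} determined by $\Gamma_\Lambda^s$. Likewise, ``almost right regular'' asks about which vertices of $\Delta_A$ are of the form $\rad P$ for $P \in \ind\cP(A)$; a stable equivalence need not carry modules of the form $\rad Q$ in $\mod\Lambda$ to modules of the form $\rad P$ in $\mod A$, so right regularity of $\Delta_\Lambda$ gives no direct information about $\Delta_A$. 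Your appeal to ``stable equivalences respecting Nakayama permutations'' does not bridge this: the Nakayama permutation constrains the pairing $\soc P \leftrightarrow \tp P$, not the location of $\rad P$ inside $\Gamma_A^s$. The proofs in \cite{SY2} and \cite{SY7} avoid this difficulty by working instead with invariants (such as generalized standard components) whose transfer under stable equivalence can be controlled directly, and by a separate case analysis in Dynkin type; your proposed shortcut through Theorem~\ref{th:main} would require an independent argument of comparable depth to close the gap.
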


\begin{corollary}
\label{cor:6.3}
Let $K$ be an algebraically closed field. 
Then the class of orbit algebras $\wh{B}/(\varphi\nu_{\wh{B}})$,
where $B$ is a tilted algebra over $K$ and
$\varphi$ is a positive automorphism of $\wh{B}$,
is closed under stable equivalences.
\end{corollary}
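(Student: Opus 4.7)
The plan is to deduce the corollary directly from the Theorem that immediately precedes it, taking full advantage of the ``Moreover'' clause available in the algebraically closed case. The whole argument is really one application of the equivalence $(\mathrm{i}) \Leftrightarrow (\mathrm{ii})$ of that Theorem.

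First I would fix notation: let $\Lambda = \wh{B}/(\varphi\nu_{\wh{B}})$ be an algebra in the stated class, with $B$ a tilted $K$-algebra and $\varphi$ a positive automorphism of $\wh{B}$, and let $A$ be any $K$-algebra stably equivalent to $\Lambda$. Before applying the Theorem I would carry out a small normalization step: to check that one may take $A$ to be basic, connected and selfinjective (so that the hypotheses of the Theorem apply). Selfinjectivity is built into the notion of stable equivalence here; replacing $A$ by its basic representative does not alter the stable category; and block decomposition transports across stable equivalence, so after further restriction to a single block we may assume $A$ is connected. The degenerate case in which $A$ is simple is trivial, since a simple basic connected algebra is isomorphic to $K$ and is itself of the required form (take $B=K$), so it can be dismissed at once.

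Next, I would observe that condition $(\mathrm{i})$ of the Theorem holds for $A$ tautologically: by hypothesis $A$ is stably equivalent to $\Lambda$, and $\Lambda$ is an orbit algebra $\wh{B}/(\varphi\nu_{\wh{B}})$ of the prescribed shape. Invoking the implication $(\mathrm{i}) \Rightarrow (\mathrm{ii})$ of the Theorem, I would conclude that $A$ is socle equivalent to some orbit algebra $\wh{B'}/(\varphi'\nu_{\wh{B'}})$, where $B'$ is a tilted algebra over $K$ and $\varphi'$ is a positive automorphism of $\wh{B'}$.

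The final step uses the hypothesis that $K$ is algebraically closed: by the ``Moreover'' clause of the Theorem, in that setting socle equivalence in $(\mathrm{ii})$ may be upgraded to an isomorphism. Hence $A \cong \wh{B'}/(\varphi'\nu_{\wh{B'}})$, so $A$ itself lies in the class, which is exactly the conclusion of the corollary. There is no real obstacle here; the only point requiring any care is the preliminary reduction to the basic, connected, non-simple selfinjective case, and this is routine for stable equivalences between selfinjective algebras over an algebraically closed field.
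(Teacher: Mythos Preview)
Your deduction is exactly what the paper intends: the corollary is stated there without proof, as an immediate consequence of the preceding theorem together with its ``Moreover'' clause, and your application of $(\mathrm{i})\Rightarrow(\mathrm{ii})$ followed by the upgrade from socle equivalence to isomorphism is precisely that argument.

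One minor slip to fix: the simple algebra $K$ is \emph{not} of the form $\wh{B}/(\varphi\nu_{\wh{B}})$ for any $B$ and positive $\varphi$, since every such orbit algebra has nonzero radical (coming from the $D(B)$ summand of $\wh{B}$). So your dismissal of the simple case is incorrect as written. The right observation is that every $\Lambda$ in the class is non-simple, hence $\umod\Lambda$ is nontrivial, and therefore no simple $A$ can be stably equivalent to such a $\Lambda$; the case simply does not arise. (The reduction to basic connected algebras is in any case redundant under the paper's standing conventions in the Introduction.)
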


\bigskip

\begin{center}
\scshape{Acknowledgements}\label{ackref}
\end{center}
	\medskip
	
The authors were supported by the research grant 
DEC-2011/02/A/ST1/00216 of the National Science Center Poland.
The second named author was also supported by
JSPS KAKENHI grants 16K05091 and 19K03417.

\bigskip
\bigskip
\noindent
   Andrzej Skowro\'nski\\
   Faculty of Mathematics and Computer Science\\
   Nicolaus Copernicus University\\
   Chopina~12/18\\
   87-100 Toru\'n\\
   Poland\\
   \email{skowron@mat.uni.torun.pl}

\bigskip

\noindent
   Kunio Yamagata\\
   Department of Mathematics\\
    Tokyo University of Agriculture and Technology\\
    Nakacho 2-24-16, Koganei\\
    Tokyo 184-8588\\
   Japan\\
   \email{yamagata@cc.tuat.ac.jp}

\begin{thebibliography}{99}
%

%
\bibitem{AKMW} 
{S. Ariki, R. Kase, K. Miyamoto \and K. Wada},
`Self-injective cellular algebras whose representation type are tame of polynomial growth',
{\em Algebr. Represent. Theory}, in press, doi:10.1007/s10468-019-09872-w.
%
\bibitem{ANS}
{I. Assem, J. Nehring \and A. Skowro\'nski},
`Domestic trivial extensions of simply connected algebras',
{\em Tsukuba J. Math. }150  (2014) 415--452.
\bibitem{ASS}
 {I. Assem, D. Simson \and A. Skowro\'nski}, 
 {\em Elements of the Representation Theory of Associative Algebras 1: Techniques of Representation Theory}, 
 London Math. Soc. Stud. Texts {65} 
 (Cambridge Univ. Press, Cambridge, 2006).
%
%
\bibitem{BB}
 {R. Bautista \and S. Brenner}, 
    `On the number of terms in the middle of an almost split sequence', 
    {\em Representations of Algebras}, 
    Lecture Notes in Math. 903, 
    (Springer-Verlag, Berlin-Heidelberg, 1981) 1--8.
%
\bibitem{BES}
  {J. Bia\l kowski, K. Erdmann \and A. Skowro\'nski},
  `Periodicity of self-injective algebras of polynomial growth',
{\em  J. Algebra  } 443 (2015)  200--269. 
%
\bibitem{BiS1}
  {J. Bia\l kowski \and A. Skowro\'nski},
  `Selfinjective algebras of tubular type',
{\em  Colloq. Math.  }94  (2002)  175--194. 
%
\bibitem{BiS2}
  {J. Bia\l kowski \and A. Skowro\'nski},
  `On tame weakly symmetric algebras having only periodic modules',
{\em   Arch. Math. (Basel)  }81  (2003)  142--154. 
%
\bibitem{BiS3}
  {J. Bia\l kowski \and A. Skowro\'nski},
  `Socle deformations of selfinjective algebras of tubular type',
{\em  J. Math. Soc. Japan  }56  (2004)  687--716. 
%
\bibitem {BS1}
 {M. B\l aszkiewicz \and A. Skowro\'nski}, 
 `On selfinjective algebras of finite representation type with maximal almost split sequences', 
 {\em J. Algebra  }422  (2015) 450--486.
%
\bibitem{BoS1}
  {R. Bocian \and A. Skowro\'nski},
  `Symmetric special biserial algebras of Euclidean type',
{\em  Colloq. Math.  }96  (2003)  121--148. 
%
\bibitem{BoS2}
  {R. Bocian \and A. Skowro\'nski},
  `Weakly symmetric algebras of Euclidean type',
{\em J. reine angew. Math.  }580  (2005)  157--199. 
%
\bibitem{BoS3}
  {R. Bocian \and A. Skowro\'nski},
  `Socle deformations of selfinjective algebras of Euclidean type.',
{\em Comm. Algebra  }34  (2006)  4235--4257. 
%
%
\bibitem {BLR}
 {O. Bretscher,  C. L\"aser \and C. Riedtmann}, 
 `Self-injective and simply connected algebras', 
 {\em Manuscripta Math.  }36  (1981/82) 253--307.
%
\bibitem{DS1}
 {P. Dowbor \and A. Skowro\'nski}, 
`On Galois coverings of tame algebras',
 {\em Arch. Math. (Basel)  }44  (1985)  522--5297. 
%
\bibitem{DS}
 {P. Dowbor \and A. Skowro\'nski}, 
`Galois coverings of representation-infinite algebras',
 {\em Comment. Math. Helv.  }62  (1987)  311--337. 
%
\bibitem{Du}
{A.~Dugas},
`Periodic resolutions and self-injective algebras of finite type',
{\em J. Pure Appl. Algebra}  {214}  (2010)   990--1000.
%
\bibitem{EKS}
  {K. Erdmann, O. Kerner \and A. Skowro\'nski},
  `Self-injective algebras of wild tilted type',
  {\em  J. Pure Appl. Algebra  }149  (2000)   127--176.
\bibitem{ES}
  {K. Erdmann \and A. Skowro\'nski},
  `On Auslander-Reiten components of blocks and selfinjective biserial algebras'
  {\em  Trans. Amer. Math. Soc. }330 (1992) 165--189. 
%
\bibitem{FS1}
  {R. Farnsteiner \and A. Skowro\'nski},
  `Classification of restricted Lie algebras with tame principal block',
  {\em  J. reine angew. Math.  }546  (2002) 1--45.
%
\bibitem{FS2}
  {R. Farnsteiner \and A. Skowro\'nski},
  `The tame infinitesimal groups of odd characteristic',
  {\em  Adv. Math.  }205  (2006) 229--274.
%
\bibitem {G}
    P. Gabriel, 
    `The universal cover of a representation-finite algebra', 
    {\em Representations of Algebras}, 
    Lecture Notes in Math. 903, 
    (Springer-Verlag, Berlin-Heidelberg, 1981) 68--105.
%
%
\bibitem {HL}
{D. Happel \and S. Liu}, 
`Module categories without short cycles are of finite type', 
{\em Proc. Amer. Math. Soc. }{120} (1994) 371--375.
%
%
\bibitem{Ho}
   {M. Hoshino}, 
  `Trivial extensions of tilted algebras', 
  {\em Comm. Algebra }{10} (1982) 1965--1999.
%
\bibitem{HW}
  {D. Hughes \and J. Waschb\"usch}, 
  `Trivial extensions of tilted algebras', 
  {\em Proc. London Math. Soc. }{46} (1983) 347--364.
%
%
\bibitem{JPS}
{A. Jaworska-Pastuszak \and A. Skowro\'nski},
`Selfinjective algebras without short cycles of indecomposable modules',
{\em J. Pure Appl. Algebra }222  (2018)  3432--3447.
%
%
\bibitem{KS}
{ O. Kerner \and A. Skowro\'nski},
`On module categories with nilpotent infinite radical',
{\em Compos. Math.  }{77}  (1991)  313--333. 
%
\bibitem{LS1}
  {H. Lenzing \and A. Skowro\'nski},
  `On selfinjective algebras of Euclidean type',
{\em  Colloq. Math.  }79  (1999)  71--76. 

\bibitem{LS2}
  {H. Lenzing \and A. Skowro\'nski},
  `Roots of Nakayama and Auslander-Reiten translations',
{\em  Colloq. Math.  }86  (2000)  209--230. 

\bibitem{LS3}
  {H. Lenzing \and A. Skowro\'nski},
  `Selfinjective algebras of wild canonical type',
{\em  Colloq. Math.  }96  (2003)  245–275. 
%
%
\bibitem{L}
{S. Liu},
`Almost split sequences for nonregular modules',
{\em Fund. Math. }143 (1993) 183--190.
%
%
\bibitem{NS}
{J. Nehring \and A. Skowro\'nski},
`Polynomial growth trivial extensions of simply connected algebras',
{\em  Fund. Math.  }132  (1989) 117--134.
%
\bibitem{O}
{Y. Ohnuki},
`Stable equivalence induced by a socle equivalence',
{\em Osaka J. Math. Soc. }39 (2002), 259--266.
%
\bibitem{PS}
{Z. Pogorza\l y \and A. Skowro\'nski},
`Selfinjective biserial standard algebras',
{\em J. Algebra }138  (1991) 491--504.
%
\bibitem{RSS}
{I. Reiten, A. Skowro\'nski \and  S. O. Smal\o}, 
`Short chains and short cycles of modules',
{\em Proc. Amer. Math. Soc. }117 (1993) 343--354.
 
%



\bibitem{Ried}
{C. Riedtmann},
`Algebren, Darstellungsk\"ocerm, \"Uberlagerungen and zur\"uk', 
{\em Comment. Math. Helv. }55 (1980) 199--224.


\bibitem{Rd2}
{C. Riedtmann},
 `Representation-finite self-injective algebras of class $\bA_n$', 
  {\em Representation Theory, II}, 
  Lecture Notes in Math. 832, 
 (Springer-Verlag, Berlin-Heidelberg, 1980) 449--520.

\bibitem{Rd3}
{C. Riedtmann},
`Representation-finite self-injective algebras of class $\bD_n$', 
{\em  Compositio Math. }49 (1983) 199--224.

\bibitem{S3}
{A. Skowro\'nski},
`Selfinjective algebras of polynomial growth',
{\em Math. Ann. }285  (1989) 177--199.
\bibitem{S2}
{A. Skowro\'nski},
`Generalized standard Auslander-Reiten components',
{\em J. Math. Soc. Japan }46 (1994) 517--543.
\bibitem{S94}
{A. Skowro\'nski},
`Regular Auslander-Reiten components containing directing modules',
{\em Proc. Amer. Math. Soc. }120 (1994) 19--26.
%
\bibitem{S4}
{A. Skowro\'nski},
`Selfinjective algebras: finite and tame type',
   {\em Trends in Representation Theory of Algebras and Related Topics},
   Contemporary Math. 406, 
   (Amer. Math. Soc., Providence, RI, 2006) 169--238.
%
\bibitem{SY1}
{A. Skowro\'nski \and K. Yamagata}, 
`Socle deformations of selfinjective algebras', 
{\em Proc. London Math. Soc. }{72} (1996) 545--566.
%
\bibitem{SY2}
{A. Skowro\'nski \and K. Yamagata}, 
`Stable equivalences of selfinjective algebras of tilted type',
{\em Arch. Math. (Basel) }70 (1998) 341--350.
%
\bibitem{SY3}
{A. Skowro\'nski \and K. Yamagata}, 
`Galois coverings of selfinjective algebras by repetitive algebras', 
{\em Trans. Amer. Math. Soc. }{351} (1999) 715--734.
%
\bibitem{SY4}
{A. Skowro\'nski \and K. Yamagata}, 
`On selfinjective artin algebras having nonperiodic generalized standard Auslander-Reiten components', 
{\em Colloq. Math. }{96} (2003) 235--244.
\bibitem{SY5}
{A. Skowro\'nski \and K. Yamagata}, 
`On invariability of selfinjective algebras of tilted type under stable equivalences', 
{\em Proc. Amer. Math. Soc. }{132} (2004) 659--667.
%
\bibitem{SY6}
{A. Skowro\'nski \and K. Yamagata}, 
`Positive Galois coverings of selfinjective algebras', 
{\em Adv. Math. }{194} (2005) 398--436.
%
\bibitem{SY7}
{A. Skowro\'nski \and K. Yamagata}, 
`Stable equivalences of selfinjective artin algebras of Dynkin type', 
{\em Algebr. Represent. Theory }{9} (2006) 33--45.
%
\bibitem{SY8}
{A. Skowro\'nski \and K. Yamagata}, 
`Selfinjective algebras of quasitilted type', 
{\em  Trends in Representation Theory of Algebras and Related Topics}, 
    European Math. Soc. Series of Congress Reports, 
    (European Math. Soc. Publ. House, Z\"urich, 2008) 639--708.
%
\bibitem{SY9}
{A. Skowro\'nski \and K. Yamagata}, 
{\em Frobenius Algebras I.
    Basic Representation Theory},
    EMS Textbooks in Mathematics,
    (European Math. Soc. Publ. House, Z\"urich, 2011).
%
\bibitem{SY10}
{A. Skowro\'nski \and K. Yamagata}, 
`On selfinjective algebras of tilted type', 
{\em Colloq. Math. }{141} (2015) 89--117.
%
\bibitem{SY11}
{A. Skowro\'nski \and K. Yamagata}, 
{\em Frobenius Algebras II.
    Tilted and Hochschild Extension Algebras},
    EMS Textbooks in Mathematics,
    (European Math. Soc. Publ. House, Z\"urich, 2017).
%
\bibitem{SY12}
{A. Skowro\'nski \and K. Yamagata}, 
`Selfinjective algebras with hereditary stable slice',
{\em J. Algebra} {530} (2019) 146--162.

 \bibitem {T}
  {G. Todorov}, 
 `Almost split sequences for $\Tr D$-periodic modules', 
  {\em Representation Theory, II}, 
  Lecture Notes in Math. 832, 
 (Springer-Verlag, Berlin-Heidelberg, 1980) 600--631.

\end{thebibliography}
\end{document}